\documentclass[11pt]{article}

\usepackage[margin=1in]{geometry}
\usepackage{amsmath,amssymb,amsthm,mathtools}
\usepackage{microtype}
\usepackage{graphicx}
\usepackage[round]{natbib}
\usepackage{xcolor}
\usepackage{thmtools}
\usepackage{thm-restate}
\usepackage[colorlinks=true, citecolor=blue, hypertexnames=false]{hyperref}

\newtheorem{theorem}{Theorem}[section]
\newtheorem{lemma}[theorem]{Lemma}

\newtheorem{corollary}[theorem]{Corollary}
\newtheorem{assumption}[theorem]{Assumption}
\theoremstyle{remark}
\newtheorem{remark}[theorem]{Remark}

\newcommand\widecheck[1]{%
    #1%
    \text{\llap{%
        \smash{\raisebox{\dimexpr 2\depth-0.5ex\relax}{%
            \scalebox{1}[-1]{$\widehat{\phantom{#1}}$}%
        }}%
    }\rule{0pt}{1.1em}}%
}

\title{Wasserstein Bounds for Euler and Randomized Midpoint Langevin Sampling with Noisy Gradients}
\author{Yifei Xiong and Vinayak Rao
\vspace{0.7em}\\
Department of Statistics, Purdue University}
\date{\today}

\begin{document}
\maketitle

\begin{abstract}
Stochastic-gradient Langevin methods are sampling algorithms that replace the gradient of the log probability density by a noisy, possibly biased evaluation. We study how such gradient-evaluation errors affect the Euler and randomized midpoint discretizations of Langevin dynamics by comparing each noisy chain with its exact-gradient counterpart. For continuously differentiable, strongly log-concave targets with globally Lipschitz gradients, we derive comparison bounds under a sub-Gaussian condition on the gradient noise.
The resulting 2-Wasserstein bounds hold uniformly over iterations, and are linear in both the step size and the $L^2$ norm of the evaluation bias, with a  square-root dependence on dimensionality.
To the best of our knowledge, no comparable 2-Wasserstein bounds have previously been established for randomized midpoint with noisy gradients.
For Euler, our results complement recent Wasserstein bounds by allowing biased evaluations, and by not requiring additional regularity assumptions on the evaluation errors.

\end{abstract}

\section{Introduction}\label{sec:intro}

Langevin Monte Carlo (LMC) provides a simple way to sample from a density $\pi(x)\propto\exp(-U(x))$ using evaluations of $\nabla U$. In many settings, gradient evaluations can dominate the computational cost. For instance, if $\pi$ is a Bayesian posterior distribution based on a large data set, each exact gradient may require a pass through all observations.  Stochastic gradient Langevin dynamics (SGLD) replaces this calculation by a gradient estimate based on a small random subset of the data \citep{welling2011bayesian}. This reduces the cost of each step, but introduces an additional source of error into an already discretized process. We refer the reader to \citet{nemeth2021stochastic} for a review of stochastic gradient Monte Carlo.

An approximate drift evaluation can contain both a systematic bias and a centered random error. Even when the evaluation is unbiased, its random fluctuations can change the distribution sampled at a fixed step size. \citet{vollmer2016exploration} characterize the dependence of SGLD's asymptotic bias on the step size and gradient variance, and study the bias and variance of finite-time estimators. \citet{brosse2018promises} show that this variance can substantially affect sampling accuracy as the data set grows. Variance reduction offers one way to limit this effect \citep{chatterji2018theory}. 
For Euler discretization, \citet{dalalyan2019user} give nonasymptotic Wasserstein bounds that separate the contributions of bias and centered fluctuations.

\citet{li2022sharp} use relative entropy to compare SGLD with the Langevin diffusion uniformly in time, obtaining first-order Wasserstein accuracy under their regularity and initialization assumptions.
{More recently, \citet{paulin2026theoretical} obtain first-order Wasserstein bounds between the invariant distribution of SGLD and the target distribution under fourth-moment and mean-square Lipschitz conditions on centered gradient errors. Their analysis compares SGLD with a stationary exact-gradient Euler chain and also yields finite-time bounds relative to the target distribution.}

{
Here, we directly compare the distributions of the noisy and exact-gradient chains of the same discretization at each iteration, starting from a common initial distribution that need not be stationary. 
}
This quantifies the additional error from approximate gradients separately from the discretization.
We assume a sub-Gaussian condition on the centered errors, without requiring additional regularity of the evaluation errors.
The resulting comparison bounds are uniform over iterations and also allow biased evaluations.

These comparison bounds can be combined with exact-gradient error bounds to control the distance to the target distribution.
\citet{dalalyan2026improved} sharpen Wasserstein bounds for LMC using average coordinate-wise smoothness.
For twice continuously differentiable, strongly convex potentials with Lipschitz gradients, \citet{pedrotti2026wasserstein} establish an order-$\alpha$ bound on the 2-Wasserstein distance between the invariant distribution of exact-gradient Euler and the target distribution, where $\alpha$ is the step size. 
This motivates seeking an order-$\alpha$ bound for the effect of centered gradient errors, with a separate term for systematic bias.

We also study the randomized midpoint (RM) method. Introduced for log-concave sampling by \citet{shen2019randomized}, RM uses two drift evaluations per step, one at the current state and one at a random intermediate state.
Its stationary bias and asymptotic normality have been studied by \citet{he2020ergodicity}. 
For twice differentiable, strongly convex potentials with Lipschitz gradients, \citet{yu2024langevin} establish a Wasserstein discretization bound of order $\alpha$. This raises the question of whether RM retains this accuracy when both drift evaluations in each step are approximate.
Extending the comparison to RM requires accounting for the second evaluation at a random state that depends on both the first evaluation and the Gaussian noise within the step.

We address this question for conditionally sub-Gaussian centered errors, uniformly over the query points. 
For the Euler method, we obtain an order-$(\alpha+\delta)$ comparison between the noisy and exact-gradient chains in 2-Wasserstein distance, where $\delta$ controls the evaluation bias in $L^2$.
For RM, we allow both evaluations to be biased and obtain a uniform comparison bound of the same order. Both comparisons hold for continuously differentiable, strongly convex potentials with globally Lipschitz gradients. The bounds depend explicitly on the dimension $p$ through a factor of $\sqrt p$.
Combining them with the exact-gradient bounds under their additional assumptions yields sampling accuracy of order $\alpha$ for both methods when the evaluations are unbiased or $\delta=O(\alpha)$, once the initialization error has decayed to this scale. A fixed finite-variance Euler example shows that second-moment control alone cannot guarantee this first-order comparison.

The rest of the paper is organized as follows. Section~\ref{sec:prelim} introduces the discretizations and the drift-error model. Section~\ref{sec:first-order} presents the Euler and RM bounds, with proofs in Appendix~\ref{sec:proofs}. Numerical experiments are presented in Section~\ref{sec:exp}, and Section~\ref{sec:conclusion} discusses the implications and limitations of the results.

\section{Background and preliminaries}\label{sec:prelim}

Let $\pi$ be a probability distribution on $\mathbb R^p$ with density $\pi(x)\propto\exp(-U(x))$. The associated Langevin diffusion is
\begin{equation*}
\mathrm dX_t=-\nabla U(X_t)\mathrm dt+\sqrt2\mathrm dB_t,
\end{equation*}
where $B_t$ is a standard $p$-dimensional Brownian motion. Under the assumptions below, this diffusion has invariant distribution $\pi$. Langevin Monte Carlo approximates it by a discrete-time chain. The convergence properties of the diffusion and its discretizations have long been studied \citep{roberts1996exponential}. Our focus is the additional error introduced by approximating the drift in a discrete update.

We write $\mathcal L(X)$ for the distribution of a random vector $X$, and $\|\cdot\|$ for the Euclidean norm. For probability measures $\mu$ and $\nu$ with finite second moments, their 2-Wasserstein distance is
\begin{equation*}
W_2(\mu,\nu)=\inf_{\mathcal L(X)=\mu, \mathcal L(Y)=\nu}
\big(\mathbb E\|X-Y\|^2\big)^{1/2},
\end{equation*}
where the infimum is over joint distributions of $(X,Y)$ with the prescribed marginals. Any such coupling gives an upper bound on $W_2$. All initial distributions considered below have finite second moments.

For the theoretical analysis below, we impose the following conditions on the potential $U$.

\begin{assumption}[Strong convexity]\label{ass:convex}
The function $U:\mathbb{R}^p\to\mathbb{R}$ is continuously differentiable, and there exists a constant $m>0$ such that
\begin{equation*}
U(x)-U(y)-\nabla U(y)^\top(x-y) \ge \frac{m}{2}\|x-y\|_2^2, \quad \forall x,y\in\mathbb{R}^p.
\end{equation*}
\end{assumption}

\begin{assumption}[Smoothness]\label{ass:smooth}
There exists a constant $M>0$ such that
\begin{equation*}
\|\nabla U(x)-\nabla U(y)\|_2 \le M\|x-y\|_2, \quad \forall x,y\in\mathbb{R}^p.
\end{equation*}
\end{assumption}

We write $\kappa=M/m$ for the condition number. 
These assumptions are standard in nonasymptotic analyses of Langevin Monte Carlo \citep{dalalyan2017further,durmus2019high}.

\subsection{Exact-drift discretizations}\label{sec:disc}

The Euler discretization of the Langevin diffusion is
\begin{equation}\label{eq:euler}
X_{k+1}^{\mathrm{EU}}=X_k^{\mathrm{EU}}-\alpha\nabla U(X_k^{\mathrm{EU}})+\sqrt{2\alpha}Z_{k+1},
\quad Z_{k+1}\sim\mathcal N_p(0,I_p),
\end{equation}
where $\alpha>0$ is the step size and $Z_{k+1}$ are independent standard Gaussian random vectors.

Classical Euler bounds under Assumptions~\ref{ass:convex} and~\ref{ass:smooth} have a discretization term of order $\sqrt\alpha$ \citep[Theorem~1]{dalalyan2019user}. The following recent result gives an order-$\alpha$ stationary bias bound for $C^2$ potentials.

\begin{lemma}[Stationary bias of Euler discretization~\citep{pedrotti2026wasserstein}]\label{lem:euler-stationary-bias}
Suppose Assumptions~\ref{ass:convex} and~\ref{ass:smooth} hold, $U\in C^2(\mathbb R^p)$, and $\alpha\le1/M$. Let $\widehat\pi_\alpha$ denote the invariant distribution of the Euler chain~\eqref{eq:euler} with step size $\alpha$. Then
\begin{equation*}
W_2(\widehat\pi_\alpha,\pi)\le6\alpha M\sqrt{\frac pm}.
\end{equation*}
\end{lemma}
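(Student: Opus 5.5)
The plan is a synchronous coupling at stationarity between the Langevin diffusion and the Euler chain~\eqref{eq:euler}. The local error is split into a conditionally centered part and a conditional-mean part. The centered part is shown to accumulate only in quadrature, which is what yields order $\alpha$ rather than $\sqrt\alpha$. The mean part needs a sharper treatment; that step is not worked out below and is where the plan may fail.

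\textbf{Coupling and recursion.} Let $Y_0\sim\pi$ and let $(Y_t)$ solve the Langevin diffusion, so that $\mathcal L(Y_t)=\pi$ for all $t$. Let $\widehat X_k$ be the Euler chain driven by the same Brownian motion, with $\sqrt{\alpha}Z_{k+1}=B_{(k+1)\alpha}-B_{k\alpha}$ and $\widehat X_0=Y_0$. Set $\Delta_k=Y_{k\alpha}-\widehat X_k$. Then
\begin{equation*}
\Delta_{k+1}=\Delta_k-\alpha\bigl(\nabla U(Y_{k\alpha})-\nabla U(\widehat X_k)\bigr)+e_k,\qquad e_k=-\int_{k\alpha}^{(k+1)\alpha}\bigl(\nabla U(Y_s)-\nabla U(Y_{k\alpha})\bigr)\,\mathrm ds .
\end{equation*}
Under Assumptions~\ref{ass:convex}--\ref{ass:smooth} and $\alpha\le1/M$, the map $x\mapsto x-\alpha\nabla U(x)$ is $(1-m\alpha)$-Lipschitz. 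Let $\mathcal F_k$ be the $\sigma$-field generated by $Y_0$ and the Brownian path up to time $k\alpha$. Decompose $e_k=a_k+b_k$ with $b_k=\mathbb E[e_k\mid\mathcal F_k]$, so that $a_k$ is centered given $\mathcal F_k$. Expanding $\mathbb E\|\Delta_{k+1}\|^2$, the cross term between $a_k$ and the $\mathcal F_k$-measurable contracted term vanishes. Writing $D_k^2=\mathbb E\|\Delta_k\|^2$, this gives
\begin{equation*}
D_{k+1}^2\le (1-m\alpha)^2D_k^2+2(1-m\alpha)\,\bigl|\mathbb E\langle \Delta_k-\alpha(\nabla U(Y_{k\alpha})-\nabla U(\widehat X_k)),b_k\rangle\bigr|+\mathbb E\|e_k\|^2 .
\end{equation*}

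\textbf{Mean-square local error.} Writing $Y_s-Y_{k\alpha}=-\int\nabla U+\sqrt2(B_s-B_{k\alpha})$ and using stationarity, $\mathbb E_\pi\|\nabla U\|^2\le Mp$, and Lipschitzness, I expect $\mathbb E\|e_k\|^2\lesssim \alpha^3M^2p$. Summed against the factor $(1-m\alpha)^{2j}$, this contributes $\alpha^3M^2p/(m\alpha)=\alpha^2M^2p/m$ to $D_k^2$. This is exactly the target scale $(\alpha M\sqrt{p/m})^2$.

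\textbf{Conditional-mean term (main obstacle).} The term involving $b_k$ must be shown to be $O(\alpha^{2}M^2p/m)\cdot m\alpha$ per step plus a fraction of $m\alpha D_k^2$, so that it can be absorbed. The naive bound $\|b_k\|_{L^2}\lesssim\alpha^{3/2}M\sqrt p$ only gives order $\sqrt\alpha$. The tempting Itô expansion writes $b_k=-\int\!\int \mathbb E[\mathcal L\nabla U(Y_r)\mid\mathcal F_k]$, where $\mathcal L$ is the generator. This needs third derivatives, which the hypothesis $U\in C^2$ does not provide. I would try first to move one derivative off $\nabla U$ and onto the test function, as follows.
\begin{itemize}
\item Approximate $U$ by Gaussian mollifications. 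These preserve the constants $m$ and $M$, and the resulting invariant measures converge in $W_2$, so one may assume $U$ is smooth, provided the final estimate uses only $m$ and $M$.
\item Write the inner product $\mathbb E\langle h_k,b_k\rangle$ using reversibility of $\pi$. Via the Dirichlet form $\mathbb E_\pi\langle\nabla h,\nabla^2U\rangle$, this requires only $\|\nabla^2U\|\le M$ and $\mathbb E\|\nabla^2U\|_F^2\le M^2p$.
\item Control $\nabla h$ through the Lipschitz dependence of $\Delta_k$ on the starting point, via the synchronous-coupling derivative, which is bounded by $1$.
\end{itemize}
The difficulty is that $\Delta_k$ depends on the whole path and not only on $Y_{k\alpha}$. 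I would handle this by conditioning on $\mathcal F_k$ and using the Markov property of the diffusion from time $k\alpha$.

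\textbf{Conclusion and constant.} Solving the recursion gives $\sup_kD_k\le C\alpha M\sqrt{p/m}$. Since $\mathcal L(Y_{k\alpha})=\pi$ and $\mathcal L(\widehat X_k)\to\widehat\pi_\alpha$, this yields $W_2(\widehat\pi_\alpha,\pi)\le C\alpha M\sqrt{p/m}$. Tracking the numerical constants from each of the three contributions (mean-square error, mean term, and absorption) should give $C=6$.
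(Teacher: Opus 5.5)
The paper gives no proof of this lemma. It is imported as a black box from \citet{pedrotti2026wasserstein} and used only through Corollary~\ref{cor:euler-rate}, so your proposal has to stand on its own, and it does not. The parts you carry out are the synchronous coupling at stationarity, the $(1-m\alpha)$-contraction, orthogonality of the centered part $a_k$, and $\mathbb E\|e_k\|^2\lesssim\alpha^3M^2p$. Together these are exactly the standard route to the classical $O(\sqrt\alpha)$ bound. The whole content of an order-$\alpha$ bound under mere $C^2$ regularity lies in the conditional-mean term, which you leave open, and the constant $6$ is asserted rather than derived. (A minor point: with $h_k=\Delta_k-\alpha(\nabla U(Y_{k\alpha})-\nabla U(\widehat X_k))$, the expansion gives $2\mathbb E\langle h_k,b_k\rangle$. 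The extra factor $1-m\alpha$ is unjustified, though harmless.)

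The sketched remedy does not close the gap. By the Markov property, $b_k=\beta(Y_{k\alpha})$ with $\beta(y)=-\int_0^\alpha(P_s\nabla U(y)-\nabla U(y))\,\mathrm ds$, where $P_s$ is the Langevin semigroup. So the cross term equals $\mathbb E_\pi\langle g_k,\beta\rangle$ with $g_k(y)=\mathbb E[h_k\mid Y_{k\alpha}=y]$. This conditions on the \emph{endpoint} of the path, so the derivative of the synchronous coupling with respect to the starting point is not the relevant quantity.

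More importantly, even $\|\nabla g_k\|_{\mathrm{op}}\le1$ is far too weak. After mollification, $\mathbb E_\pi\langle g,P_s\nabla U-\nabla U\rangle=-\int_0^s\mathbb E_\pi\mathrm{tr}(\nabla(P_rg)\nabla^2U)\,\mathrm dr$ and $\|\nabla P_rg\|_{\mathrm{op}}\le e^{-mr}$, so
\begin{equation*}
|\mathbb E_\pi\langle g_k,\beta\rangle|\le\frac{\alpha^2}{2}\sup_{r\le\alpha}\mathbb E_\pi\bigl|\mathrm{tr}\bigl(\nabla(P_rg_k)\,\nabla^2U\bigr)\bigr|\le\frac{\alpha^2}{2}\,\mathbb E_\pi\,\mathrm{tr}\,\nabla^2U\le\frac{\alpha^2Mp}{2}.
\end{equation*}
A per-step source of size $\alpha^2Mp$ in your recursion only gives $D_k^2\lesssim\alpha Mp/m$, i.e.\ order $\sqrt\alpha$ again.

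This is sharp for Lipschitz test fields. Take the product potential with $u''(x)=\frac{m+M}2+\frac{M-m}2\sin(x/\sqrt\alpha)$ in each coordinate. For small $\alpha$, the heat part of $P_s\nabla U-\nabla U$ makes each coordinate of $\beta$ of size $(M-m)\alpha^{3/2}$ at typical points. The $1$-Lipschitz field $g_i(x)=-\sqrt\alpha\cos(x_i/\sqrt\alpha)$ then gives $\mathbb E_\pi\langle g,\beta\rangle\asymp(M-m)\alpha^2p$.

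To reach $\alpha M\sqrt{p/m}$ you need $|\mathbb E\langle h_k,b_k\rangle|\lesssim\alpha^3M^2p$. That is, $g_k$ must not correlate with $\beta$ on the spatial scale $\sqrt\alpha$; in the argument above this means a Jacobian of size $O(\alpha M)$. The alternatives do not help:
\begin{itemize}
\item Absorbing the term into $m\alpha D_k^2$ by Young's inequality would require $\|\nabla g_k\|_{L^2(\pi)}\lesssim\sqrt m\,D_k$. This is a reverse-Poincar\'e bound with no reason to hold for such fields.
\item Without a Hessian-Lipschitz assumption, the Jacobians of the diffusion and Euler flows differ through $\nabla^2U(Y_s)-\nabla^2U(\widehat X_k)=O(M)$. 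Coupling derivatives therefore give only $O(1)$.
\end{itemize}
This decorrelation estimate is the missing idea, and it is essentially the entire lemma.
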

Lemma~\ref{lem:euler-stationary-bias} controls the 2-Wasserstein distance between the invariant distribution of the Euler chain and the target $\pi$. 
To obtain a bound after finitely many steps, we also need to account for initialization.
Combining the lemma with the contraction of the Euler update gives the following finite-time bound, with proof in Appendix~\ref{app:proof-prelim}.

\begin{restatable}[Finite-time Wasserstein rate of Euler discretization]{corollary}{eulerfiniterate}
\label{cor:euler-rate}
Under the assumptions of Lemma~\ref{lem:euler-stationary-bias}, for any initial state $X_0^{\mathrm{EU}}$ with finite second moment and every $k\ge0$,
\begin{equation}\label{eq:euler-rate}
W_2(\mathcal{L}(X_k^{\mathrm{EU}}),\pi) \le (1-m\alpha)^k W_2(\mathcal{L}(X_0^{\mathrm{EU}}),\pi) + 12\alpha M\sqrt{\frac pm}.
\end{equation}
\end{restatable}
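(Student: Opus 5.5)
The plan is to use the triangle inequality through the invariant distribution $\widehat\pi_\alpha$ of the Euler chain~\eqref{eq:euler}:
\begin{equation*}
W_2(\mathcal L(X_k^{\mathrm{EU}}),\pi)\le W_2(\mathcal L(X_k^{\mathrm{EU}}),\widehat\pi_\alpha)+W_2(\widehat\pi_\alpha,\pi).
\end{equation*}
Lemma~\ref{lem:euler-stationary-bias} controls the second term by $6\alpha M\sqrt{p/m}$. The first term will be handled by a contraction of the Euler update. Iterating the contraction gives a factor $(1-m\alpha)^k$ times $W_2(\mathcal L(X_0^{\mathrm{EU}}),\widehat\pi_\alpha)$. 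A second triangle inequality then bounds this initial distance by $W_2(\mathcal L(X_0^{\mathrm{EU}}),\pi)+6\alpha M\sqrt{p/m}$. Since $(1-m\alpha)^k\le1$, the two bias terms add up to at most $12\alpha M\sqrt{p/m}$, which is exactly~\eqref{eq:euler-rate}.

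The key step is the one-step contraction. I will use a synchronous coupling: run two Euler chains, one started from $\mathcal L(X_0^{\mathrm{EU}})$ and one from $\widehat\pi_\alpha$, with the same Gaussian increments $Z_{k+1}$. Choose the initial pair to be an optimal $W_2$ coupling, which exists because both measures have finite second moments. The chain started at $\widehat\pi_\alpha$ stays distributed as $\widehat\pi_\alpha$ by invariance. The noise cancels in the difference, so it suffices to show that the deterministic map $T(x)=x-\alpha\nabla U(x)$ is $(1-m\alpha)$-Lipschitz when $\alpha\le 1/M$. Expanding,
\begin{equation*}
\|T(x)-T(y)\|^2=\|x-y\|^2-2\alpha\langle\nabla U(x)-\nabla U(y),x-y\rangle+\alpha^2\|\nabla U(x)-\nabla U(y)\|^2.
\end{equation*}
The inner product is controlled by co-coercivity of strongly convex, $M$-smooth functions:
\begin{equation*}
\langle g,x-y\rangle\ge\frac{mM}{m+M}\|x-y\|^2+\frac1{m+M}\|g\|^2,\qquad g=\nabla U(x)-\nabla U(y).
\end{equation*}
This holds under Assumptions~\ref{ass:convex} and~\ref{ass:smooth} without $C^2$. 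When $\alpha\le 2/(m+M)$, substituting it gives the contraction factor $\max(|1-m\alpha|,|1-M\alpha|)$. For $\alpha\le1/M$ this equals $1-m\alpha$.

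The condition $\alpha\le1/M$ is in the assumptions, and it implies $\alpha\le 2/(m+M)$ because $m\le M$. Alternatively, under $U\in C^2$ one can write $T(x)-T(y)=(I-\alpha H)(x-y)$ with $H=\int_0^1\nabla^2U(y+t(x-y))\,dt$. Here $mI\preceq H\preceq MI$, so $\|I-\alpha H\|\le1-m\alpha$ directly; this is simpler and available under the lemma's hypotheses. Taking expectations under the coupling gives $W_2(\mathcal L(X_{k+1}^{\mathrm{EU}}),\widehat\pi_\alpha)\le(1-m\alpha)W_2(\mathcal L(X_k^{\mathrm{EU}}),\widehat\pi_\alpha)$. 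The only mild subtlety is justifying existence and uniqueness of $\widehat\pi_\alpha$ with finite second moment. This follows from the same contraction via Banach's fixed point theorem on the complete space of measures with finite second moment under $W_2$, or can be taken as implicit in Lemma~\ref{lem:euler-stationary-bias}. I expect no real obstacle; the main care is the contraction constant, which the Hessian argument settles cleanly.
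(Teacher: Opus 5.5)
Your proposal is correct and follows the paper's route: a synchronous coupling with a chain started at $\widehat\pi_\alpha$, the $(1-m\alpha)$-Lipschitz property of $x\mapsto x-\alpha\nabla U(x)$, and two triangle inequalities giving $(1+(1-m\alpha)^k)W_2(\widehat\pi_\alpha,\pi)\le 12\alpha M\sqrt{p/m}$; the paper obtains the Lipschitz bound from $0\preceq I_p-\alpha\nabla^2U\preceq(1-m\alpha)I_p$, which is your second option. Your co-coercivity variant is also valid and shows that the contraction step itself needs no $C^2$ assumption, although Lemma~\ref{lem:euler-stationary-bias} requires $C^2$ anyway.
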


The first term in~\eqref{eq:euler-rate} controls the effect of initialization and decays with $k$, while the second bounds the discretization error and is of order $\alpha$ for fixed $m,M,p$.
We will later combine this estimate with our comparison between the noisy and exact Euler chains to bound the distance of the noisy chain to $\pi$.

The randomized midpoint (RM) method~\citep{shen2019randomized,yu2024langevin} uses an intermediate drift evaluation to approximate the drift integral more accurately. It selects a uniformly randomized time within each step.
The identity $\int_0^\alpha f(s) \mathrm ds=\mathbb{E}(\alpha f(\alpha\tau))$ for integrable $f$ and $\tau\sim\operatorname{Unif}(0,1)$ motivates evaluating the drift at one random time within the step.
The intermediate update constructs $Y_k^{\mathrm{RM}}$ by taking an Euler step of length $\alpha\tau_{k+1}$ from $X_k^{\mathrm{RM}}$. The terminal update advances from $X_k^{\mathrm{RM}}$ to $X_{k+1}^{\mathrm{RM}}$, using the drift at $Y_k^{\mathrm{RM}}$. These updates are given by
\begin{equation}\label{eq:rm-update}
\begin{aligned}
Y_k^{\mathrm{RM}}&=X_k^{\mathrm{RM}}-\alpha\tau_{k+1}\nabla U(X_k^{\mathrm{RM}})+\sqrt{2\alpha\tau_{k+1}}Z'_{k+1},\\
X_{k+1}^{\mathrm{RM}}&=X_k^{\mathrm{RM}}-\alpha\nabla U(Y_k^{\mathrm{RM}})+\sqrt{2\alpha}\left(\sqrt{\tau_{k+1}}Z'_{k+1}+\sqrt{1-\tau_{k+1}}Z''_{k+1}\right),
\end{aligned}
\end{equation}
where $\tau_{k+1}\sim\operatorname{Unif}(0,1)$ and $Z'_{k+1},Z''_{k+1}\sim\mathcal N(0,I_p)$ are mutually independent. The shared vector $Z'_{k+1}$ gives the midpoint and terminal increments the dependence of Brownian increments at times $\alpha\tau_{k+1}$ and $\alpha$.

The following lemma gives a Wasserstein bound for the RM update.

\begin{lemma}[Wasserstein rate of randomized midpoint~\citep{yu2024langevin}]\label{lem:rm-rate}
Suppose Assumptions~\ref{ass:convex} and~\ref{ass:smooth} hold and $U$ is twice differentiable. If $M\alpha+\sqrt\kappa(M\alpha)^{3/2}\le 1/4$, then for every $k\ge 0$,
\begin{equation}\label{eq:rm-rate}
W_2(\mathcal{L}(X_k^{\mathrm{RM}}),\pi) \le 1.11e^{-mk\alpha/2}W_2(\mathcal{L}(X_0^{\mathrm{RM}}),\pi) +(2.4\sqrt{\kappa M\alpha}+1.77)M\alpha\sqrt{\frac{p}{m}}.
\end{equation}
\end{lemma}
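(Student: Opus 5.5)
We would prove this by a synchronous coupling argument in the spirit of the nonasymptotic analyses of \citet{dalalyan2019user}. Let $(L_t)_{t\ge0}$ be the Langevin diffusion started at $L_0\sim\pi$, coupled optimally at time $0$ with $X_0^{\mathrm{RM}}$. On step $k$ we drive it with a Brownian motion $B$ chosen so that
\[
\sqrt{\alpha\tau_{k+1}}Z'_{k+1}=B_{k\alpha+\alpha\tau_{k+1}}-B_{k\alpha},
\qquad
\sqrt{\alpha\tau_{k+1}}Z'_{k+1}+\sqrt{\alpha(1-\tau_{k+1})}Z''_{k+1}=B_{(k+1)\alpha}-B_{k\alpha},
\]
with $\tau_{k+1}$ independent of $B$. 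Since $L_{k\alpha}\sim\pi$ for all $k$, it suffices to bound $\Delta_k=X_k^{\mathrm{RM}}-L_{k\alpha}$ in $L^2$. The diffusion satisfies $L_{(k+1)\alpha}=L_{k\alpha}-\int_0^\alpha\nabla U(L_{k\alpha+s})\,\mathrm ds+\sqrt2(B_{(k+1)\alpha}-B_{k\alpha})$. Subtracting this from the RM update, the noise cancels exactly and
\[
\Delta_{k+1}=\underbrace{\Delta_k-\alpha\bigl(\nabla U(X_k^{\mathrm{RM}})-\nabla U(L_{k\alpha})\bigr)}_{A_k}
\;-\;\alpha\bigl(\nabla U(Y_k^{\mathrm{RM}})-\nabla U(X_k^{\mathrm{RM}})\bigr)+\alpha\bigl(\nabla U(\widetilde Y_k)-\nabla U(L_{k\alpha})\bigr)\;+\;E_k^{(1)}+E_k^{(2)}.
\]
Here $\widetilde Y_k$ denotes the RM midpoint computed from $L_{k\alpha}$. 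The error terms are
\[
E_k^{(1)}=\alpha\bigl(\nabla U(L_{k\alpha+\alpha\tau_{k+1}})-\nabla U(\widetilde Y_k)\bigr),
\qquad
E_k^{(2)}=\int_0^\alpha\nabla U(L_{k\alpha+s})\,\mathrm ds-\alpha\nabla U(L_{k\alpha+\alpha\tau_{k+1}}).
\]

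\textbf{Key steps, in order.}
\begin{enumerate}
\item \emph{Contraction.} By strong convexity and smoothness with $M\alpha\le1/4$, we have $\|A_k\|\le(1-m\alpha)\|\Delta_k\|$. The two midpoint-difference terms are handled together: their combination is at most $M\alpha\cdot\|X_k^{\mathrm{RM}}-L_{k\alpha}-\alpha\tau(\nabla U(X_k^{\mathrm{RM}})-\nabla U(L_{k\alpha}))\|$ beyond what is absorbed in $A_k$, so overall $\Delta_k\mapsto$ (exact part) contracts at rate $1-m\alpha+O((M\alpha)^2)$. This is where $M\alpha\le1/4$ enters.
\item \emph{Bias term.} $E_k^{(1)}$ is bounded using $\|L_{k\alpha+\alpha\tau}-\widetilde Y_k\|\le\int_0^{\alpha\tau}\|\nabla U(L_{k\alpha+s})-\nabla U(L_{k\alpha})\|\,\mathrm ds$. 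Under stationarity $\mathbb E\|L_{t}-L_{k\alpha}\|^2\lesssim (t-k\alpha)p$, which yields $\|E_k^{(1)}\|_{L^2}\lesssim M^2\alpha^{5/2}\sqrt p$.
\item \emph{Fluctuation term.} $E_k^{(2)}$ is exactly mean zero conditionally on the whole path $B$ and on $\mathcal F_k$, by the identity $\int_0^\alpha f=\mathbb E_\tau[\alpha f(\alpha\tau)]$. Its $L^2$ size is $\lesssim M\alpha\sqrt{\alpha p}$, from the same increment bound.
\item \emph{Recursion.} Expanding $\mathbb E\|\Delta_{k+1}\|^2$, the cross term between $E_k^{(2)}$ and the $\tau$-independent part of the main term vanishes. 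Terms of type $E^{(1)}$ are treated with the triangle inequality, giving a recursion of the form
\[
\|\Delta_{k+1}\|_{L^2}^2\le\bigl((1-m\alpha/2)\|\Delta_k\|_{L^2}+cM^2\alpha^{5/2}\sqrt p\bigr)^2+c'M^2\alpha^3p .
\]
Unrolling it gives $e^{-mk\alpha/2}\|\Delta_0\|$, plus $\sqrt{\kappa M\alpha}\,M\alpha\sqrt{p/m}$ from the bias, plus $M\alpha\sqrt{p/m}$ from the square-summed fluctuations ($M^2\alpha^3p/(m\alpha)$ under the square root).
\end{enumerate}

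\textbf{Main obstacle.} The delicate point is the cross term in step 4. The main term depends on $\tau_{k+1}$ through the midpoint $Y_k^{\mathrm{RM}}$, so it is not independent of $E_k^{(2)}$. We would split the main term into its $\tau$-average plus a remainder. The remainder is $O(M\alpha)\times$ the midpoint discrepancy, and we would bound its correlation with $E_k^{(2)}$ by Cauchy–Schwarz; this is of higher order $M^2\alpha^{5/2}\sqrt p\,\|\Delta_k\|$, which can be absorbed by Young's inequality into $m\alpha\|\Delta_k\|^2/2$. The resulting constant contributes the $\sqrt\kappa$ factor. Twice differentiability of $U$ would be used only to justify, via It\^o's formula on $\nabla U(L_t)$, a sharper moment bound on the increments of $\nabla U(L_t)$ if needed to match the stated constants. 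The explicit numbers $1.11$, $2.4$ and $1.77$ would come from tracking these constants, which we would not attempt to optimize beyond matching \citet{yu2024langevin}.
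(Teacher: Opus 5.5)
The paper gives no proof of this lemma; it quotes it from \citet{yu2024langevin}, so your plan has to stand on its own. Much of it does. The coupling with a stationary diffusion through one Brownian path is sound, as are the bounds $\|E_k^{(1)}\|_{L^2}\lesssim M^2\alpha^{5/2}\sqrt p$ and $\|E_k^{(2)}\|_{L^2}\lesssim M\alpha^{3/2}\sqrt p$, the exact centering of $E_k^{(2)}$ given $\mathcal F_k$ and $B$, and the $\tau$-average treatment of the cross term. Together they give the right shape $\sqrt{\kappa M\alpha}\,M\alpha\sqrt{p/m}+M\alpha\sqrt{p/m}$.

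\textbf{The gap is Step~1.} As written it is not a valid estimate. The combination $-\alpha[(\nabla U(Y_k^{\mathrm{RM}})-\nabla U(X_k^{\mathrm{RM}}))-(\nabla U(\widetilde Y_k)-\nabla U(L_{k\alpha}))]$ is a second difference of $\nabla U$ across a displacement of size $\sqrt{\alpha p}$. Without a Lipschitz Hessian it equals $\alpha(H_1-H_2)\Delta_k+\alpha^2\tau H_2H_1\Delta_k$. Here $H_1=\int_0^1\nabla^2U(L_{k\alpha}+s\Delta_k)\,\mathrm ds$ and $H_2$ is the analogous average on $[\widetilde Y_k,Y_k^{\mathrm{RM}}]$, and the two need not be related. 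So the combination is of first order $M\alpha\|\Delta_k\|$, and bounding it separately from $A_k$ gives a factor $1-m\alpha+O(M\alpha)$, which is not a contraction.

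The exact part $\Delta_k-\alpha(\nabla U(Y_k^{\mathrm{RM}})-\nabla U(\widetilde Y_k))$ must be treated as a whole. It is precisely the difference of two exact RM steps from $X_k^{\mathrm{RM}}$ and $L_{k\alpha}$ with shared $(\tau_{k+1},Z'_{k+1},Z''_{k+1})$, so the paper's Lemma~\ref{lem:exact-rm-contraction} applies. However, that lemma contracts only for $\alpha\lesssim1/(M\sqrt\kappa)$, because of the term $(m+M)M^2\alpha^3$. Your own rate $1-m\alpha+O((M\alpha)^2)$ would even need $\kappa M\alpha\lesssim1$.

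\textbf{The lemma's hypothesis does not provide this range.} The condition $M\alpha+\sqrt\kappa(M\alpha)^{3/2}\le1/4$ only forces $\kappa(M\alpha)^3\le1/16$. So $M\alpha=\varepsilon$ with $\kappa\asymp\varepsilon^{-3}$ is allowed, and then $M\alpha\sqrt\kappa\asymp\varepsilon^{-1/2}$ is unbounded. In that regime the properties you use cannot give a one-step contraction at all. The exact part is $(I-\alpha H_2(I-\alpha\tau H_1))\Delta_k$ with symmetric $mI\preceq H_1,H_2\preceq MI$. Take two dimensions with $H_2=\mathrm{diag}(M,m)$, $H_1$ having eigenvalue $M$ on $(1,1)/\sqrt2$ and $m$ on its orthogonal complement, and $\Delta_k=(M\alpha\tau/4,1)$. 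Then the squared norm grows by $(M\alpha)^3\tau^2/8-2m\alpha$ to leading order, which is positive once $\tau M\alpha\sqrt\kappa>4$. Taking $\Delta_k=(M\alpha/8,1)$ gives growth even on average over $\tau$ once $M\alpha\sqrt\kappa>8$.

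So the factor $1-m\alpha/2$ in your Step~4 is unsupported under the stated condition. Note that your plan never uses the $\sqrt\kappa(M\alpha)^{3/2}$ part of that condition, and uses twice differentiability only optionally. The stated range is not reachable by a one-step Euclidean contraction of this kind.

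What your argument does prove, after replacing Step~1 by Lemma~\ref{lem:exact-rm-contraction}, is a bound of the same form under the stronger restriction $\alpha\le1/(8M\sqrt\kappa)$. That is the restriction the paper itself imposes in Theorem~\ref{thm:randomized-midpoint-bias}, and the constants would still have to be computed. It does not deliver $1.11$, $2.4$ and $1.77$ under $M\alpha+\sqrt\kappa(M\alpha)^{3/2}\le1/4$.
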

For fixed $m,M,p$, the discretization terms in~\eqref{eq:euler-rate} and~\eqref{eq:rm-rate} are both of order $\alpha$. Neither estimate includes error from approximating the drift. We next introduce a model for these evaluations, which will be applied once per Euler step and twice per RM step.

\subsection{Noisy drift evaluations}\label{sec:noisy_drift}

Suppose that an evaluation at $x$ returns a random approximation $\widehat{\nabla U}(x)$ to $\nabla U(x)$. We separate its error into a bias and a centered fluctuation:
\begin{equation}\label{eq:oracle-decomposition}
\widehat{\nabla U}(x)=\nabla U(x)+b(x)+\xi(x),
\end{equation}
where $b(x)=\mathbb E[\widehat{\nabla U}(x)]-\nabla U(x)$ and $\mathbb E[\xi(x)]=0$. Here $\xi(x)$ denotes the centered error in one evaluation at $x$.

A common example is the gradient estimate used in SGLD \citep{welling2011bayesian}. For a potential $U=U_0+\sum_{i=1}^n U_i$, one can sample an index $I$ uniformly from $\{1,\ldots,n\}$ and return $\nabla U_0(x)+n\nabla U_I(x)$. Averaging over $I$ recovers $\nabla U(x)$, so this estimate has $b(x)=0$.

In a sampling algorithm, the query point is itself random. %
We assume that, conditional on this query point $X$, the centered error has the distribution of $\xi(x)$ at $x=X$ and is independent of all variables generated before that evaluation. In particular, $\mathbb E[\xi(X)\mid X]=0$.

For Euler, replacing the drift in~\eqref{eq:euler} by~\eqref{eq:oracle-decomposition} gives the noisy chain (EU-N)
\begin{equation*}
X_{k+1}^{\mathrm{EU-N}}=X_k^{\mathrm{EU-N}}-\alpha(\nabla U(X_k^{\mathrm{EU-N}})+b_{k+1}+\xi_{k+1})+\sqrt{2\alpha}Z_{k+1},
\end{equation*}
where we write
\begin{equation*}
b_{k+1}=b(X_k^{\mathrm{EU-N}}),\quad \xi_{k+1}=\xi(X_k^{\mathrm{EU-N}}).
\end{equation*}
The subscript $k+1$ labels the evaluation used in the update from $k$ to $k+1$. To quantify its effect, suppose that, for every $k\ge0$,
\begin{equation}\label{eq:bounded-second-moment}
\mathbb E\|b_{k+1}\|^2\le\delta^2p,\quad
\mathbb E\|\xi_{k+1}\|^2\le\sigma^2p.
\end{equation}
The following theorem relates these two error levels to the sampling accuracy.
\begin{theorem}[Noisy Euler bound {\citep[Theorem~4]{dalalyan2019user}}]\label{thm:classical-noisy-euler}
Suppose Assumptions~\ref{ass:convex} and~\ref{ass:smooth} hold, the evaluations satisfy the model above and~\eqref{eq:bounded-second-moment}, and $\alpha\le2/(m+M)$. Then, for every $k\ge0$,
\begin{align}\label{eq:noisy-euler-rate}
W_2(\mathcal L(X_k^{\mathrm{EU-N}}),\pi)
\le\ &(1-m\alpha)^kW_2(\mathcal L(X_0^{\mathrm{EU-N}}),\pi)+1.65\frac Mm\sqrt{\alpha p}\notag\\
&+\frac{\delta\sqrt p}{m}+\frac{\sigma^2\sqrt{\alpha p}}{1.65M+\sigma\sqrt m}.
\end{align}
\end{theorem}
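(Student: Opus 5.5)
This is the Dalalyan--Karagulyan argument, and I would reproduce it by a synchronous coupling with a stationary Langevin diffusion. Let $L_t$ be the Langevin diffusion started at $L_0\sim\pi$, optimally coupled with $X_0^{\mathrm{EU-N}}$. Drive it with the same Brownian motion as the chain, so that $\sqrt{\alpha}Z_{k+1}=B_{(k+1)\alpha}-B_{k\alpha}$. Then $L_{k\alpha}\sim\pi$ for all $k$, and it suffices to bound $x_k=(\mathbb E\|\Delta_k\|^2)^{1/2}$ with $\Delta_k=L_{k\alpha}-X_k^{\mathrm{EU-N}}$, since $W_2(\mathcal L(X_k^{\mathrm{EU-N}}),\pi)\le x_k$.

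Write $X_k=X_k^{\mathrm{EU-N}}$, $L=L_{k\alpha}$ and $h=\alpha$. One step gives
\begin{equation*}
\Delta_{k+1}=\underbrace{\Delta_k-h(\nabla U(L)-\nabla U(X_k))}_{A}\;-\;\underbrace{\int_{k h}^{(k+1)h}(\nabla U(L_t)-\nabla U(L))\,\mathrm dt}_{V}\;+\;h\,b_{k+1}\;+\;h\,\xi_{k+1}.
\end{equation*}
I would bound the terms as follows.
\begin{itemize}
\item By strong convexity and smoothness with $h\le2/(m+M)$, the gradient step is a contraction: $\|A\|\le(1-mh)\|\Delta_k\|$.
\item Using stationarity, $\mathbb E\|L_t-L\|^2\le 2(t-kh)p+(t-kh)^2Mp$, which gives $\|V\|_{L^2}\lesssim M h^{3/2}\sqrt p$ with an explicit constant.
\item The bias term satisfies $\|h b_{k+1}\|_{L^2}\le h\delta\sqrt p$.
\item Since $\xi_{k+1}$ is conditionally centered given everything up to time $k$, it is orthogonal in $L^2$ to $A$ and $b_{k+1}$. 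It is \emph{not} orthogonal to $V$, because $b$ and $\xi$ are functions of $X_k$ only, but $V$ involves the future Brownian path. The evaluation noise is independent of that path, so the cross term $\mathbb E[\xi_{k+1}^\top V]$ vanishes after conditioning on $\mathcal F_k$ and the Brownian increment.
\end{itemize}
These give the recursion
\begin{equation*}
x_{k+1}^2\le\bigl((1-mh)x_k+C M h^{3/2}\sqrt p+h\delta\sqrt p\bigr)^2+h^2\sigma^2p.
\end{equation*}

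The main obstacle is solving this recursion of the form $x_{k+1}^2\le((1-mh)x_k+a)^2+c^2$ to obtain the stated stationary constants. Dalalyan--Karagulyan's Lemma~7 says that such sequences satisfy
\begin{equation*}
x_k\le(1-mh)^kx_0+\frac{a}{mh}+\frac{c^2}{a+\sqrt{mh}\,c}.
\end{equation*}
I would prove it by induction, checking that the fixed point $\bar x$ of $x\mapsto\sqrt{((1-mh)x+a)^2+c^2}$ obeys this bound. With $a=1.65Mh^{3/2}\sqrt p+h\delta\sqrt p$ and $c=h\sigma\sqrt p$:
\begin{itemize}
\item $a/(mh)$ yields $1.65(M/m)\sqrt{hp}+\delta\sqrt p/m$;
\item the last term, after dropping the $\delta$ part of $a$ in the denominator (which only increases it), yields $\sigma^2\sqrt{hp}/(1.65M+\sigma\sqrt m)$.
\end{itemize}
The constant $1.65$ comes from carefully bounding $\|V\|_{L^2}$ using $Mh\le2$ and the stationarity moment bound $\mathbb E_\pi\|\nabla U\|^2\le Mp$. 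This is the routine calculation I would defer.
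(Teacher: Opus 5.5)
Your proposal is correct and is essentially the argument of \citet{dalalyan2019user}, which the paper cites rather than reproves. The shared steps are the synchronous coupling with a stationary diffusion, the $(1-mh)$-contraction of the gradient step for $h\le2/(m+M)$, orthogonality of the centered error, and their recursion lemma, whose fixed-point verification goes through as you describe. Orthogonality of the centered error requires that it be conditionally centered given the Brownian path on $[kh,(k+1)h]$, not merely given the past.

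The only loose spot is your displayed bound $\mathbb E\|L_t-L\|^2\le2(t-kh)p+(t-kh)^2Mp$. It drops the drift--noise cross term; it happens to be true, but stationarity alone does not give it. The Minkowski bound $\|L_t-L\|_{L^2}\le\sqrt{2(t-kh)p}+(t-kh)\sqrt{Mp}$ is what actually produces the constant. Integrating it with $Mh<2$ gives $\|V\|_{L^2}\le(2\sqrt2/3+1/\sqrt2)Mh^{3/2}\sqrt p\le1.65\,Mh^{3/2}\sqrt p$.
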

The $\delta$ term in~\eqref{eq:noisy-euler-rate} need not vanish as $\alpha$ decreases, whereas the term involving a fixed $\sigma>0$ is of order $\sqrt\alpha$. These are upper bounds under second-moment conditions. In the next section, we impose a sub-Gaussian condition and compare noisy and exact-drift chains directly, obtaining an order-$\alpha$ bound for the contribution of centered errors.

\section{Wasserstein error under sub-Gaussian drift noise}
\label{sec:first-order}

We compare each noisy chain with the corresponding chain using exact gradients, started from the same initial distribution. This separates the error due to drift approximation from the discretization error. The triangle inequality then combines the comparison with Corollary~\ref{cor:euler-rate} and Lemma~\ref{lem:rm-rate}. The following assumption controls the conditional distribution of the centered evaluation error.

\begin{assumption}[Sub-Gaussian centered drift errors]\label{ass:subgaussian}
There is a constant $K>0$ such that, for every evaluation in~\eqref{eq:oracle-decomposition} at a possibly random query point $X$,
\begin{equation*}
\mathbb E[\exp(u^\top\xi(X))\mid X]\le\exp\left(\frac{K^2}{2}\|u\|^2\right),\quad \forall u\in\mathbb R^p.
\end{equation*}
In particular, $\mathbb{E}[\xi(X)] = 0$, and $\mathbb E[\|\xi(X)\|^2\mid X]\le K^2p$.
\end{assumption}

This assumption thus implies the centered-error condition in~\eqref{eq:bounded-second-moment} with $\sigma=K$. The conditional independence in Section~\ref{sec:noisy_drift} also ensures that the sub-Gaussian bound and conditional centering remain valid when conditioning on all variables generated before an evaluation. We use this property when constructing couplings below.

\subsection{Error rate for Euler discretization with noisy drift}
\label{sec:euler-case}

For Euler evaluations, we first consider the centered fluctuation separately from the bias. Conditional on the current state $x$, the Gaussian increment $\sqrt{2\alpha}Z$ is replaced by $\sqrt{2\alpha}Z-\alpha\xi(x)$, while the bias contributes the deterministic shift $-\alpha b(x)$. The next lemma bounds the Wasserstein cost of the centered perturbation. The proof is deferred to Appendix~\ref{app:euler-case}.

\begin{restatable}[Gaussian coupling bound]
{lemma}{gaussiansmoothing}
\label{lem:gaussian-smoothing}
Under Assumption~\ref{ass:subgaussian}, fix $x\in\mathbb{R}^p$ and let $Z\sim\mathcal{N}(0,I_p)$ be independent of $\xi(x)$. If $\alpha K^2\le1$, then
\begin{equation}\label{eq:gaussian-smoothing}
W_2^2\left(\mathcal{L}(\sqrt{2\alpha}Z-\alpha\xi(x)),\mathcal{N}(0,2\alpha I_p)\right) \le \frac{2}{3}\alpha^3K^4p.
\end{equation}
\end{restatable}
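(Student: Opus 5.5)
The plan is to bound $W_2$ via Talagrand's transportation inequality for the Gaussian reference measure $\gamma=\mathcal N(0,2\alpha I_p)$, and then to control the resulting relative entropy by a $\chi^2$-divergence. The $\chi^2$-divergence of a Gaussian location mixture has a closed form that the sub-Gaussian assumption handles well.

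\emph{Step 1 (Talagrand).} Since $\gamma$ is Gaussian with covariance $2\alpha I_p$, it satisfies
\begin{equation*}
W_2^2(\mu,\gamma)\le 2\cdot 2\alpha\,\mathrm{KL}(\mu\|\gamma)
\end{equation*}
for every $\mu$. Let $\mu=\mathcal L(\sqrt{2\alpha}Z-\alpha\xi(x))$. By Jensen's inequality, $\mathrm{KL}(\mu\|\gamma)\le\log(1+\chi^2(\mu\|\gamma))$. So it suffices to bound $\chi^2$.

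\emph{Step 2 ($\chi^2$ of a Gaussian mixture).} Conditionally on $\xi=\xi(x)$, $\mu$ is $\mathcal N(a,sI_p)$ with $a=-\alpha\xi$ and $s=2\alpha$. Let $\xi'$ be an independent copy of $\xi$. Integrating the Gaussian densities gives the standard identity
\begin{equation*}
1+\chi^2(\mu\|\gamma)=\mathbb E\exp\big(a^\top a'/s\big)=\mathbb E\exp\Big(\tfrac{\alpha}{2}\xi^\top\xi'\Big).
\end{equation*}
Condition on $\xi'$ and apply Assumption~\ref{ass:subgaussian} with $u=\tfrac{\alpha}{2}\xi'$. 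This gives
\begin{equation*}
1+\chi^2\le\mathbb E\exp\big(\lambda\|\xi'\|^2\big),\qquad \lambda=\tfrac{\alpha^2K^2}{8}.
\end{equation*}

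\emph{Step 3 (exponential square moment).} This is the only slightly delicate step. I will use the Gaussian linearization trick: with $g\sim\mathcal N(0,I_p)$ independent of $\xi'$,
\begin{equation*}
\mathbb E e^{\lambda\|\xi'\|^2}=\mathbb E\, e^{\sqrt{2\lambda}\,g^\top\xi'}\le\mathbb E\, e^{\lambda K^2\|g\|^2}=(1-2\lambda K^2)^{-p/2}.
\end{equation*}
The inequality applies the sub-Gaussian bound conditionally on $g$. Set $t=2\lambda K^2=\alpha^2K^4/4$. The hypothesis $\alpha K^2\le1$ gives $t\le 1/4<1$, so the bound is finite.

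\emph{Step 4 (assemble).} Combining the steps,
\begin{equation*}
\mathrm{KL}\le-\tfrac p2\log(1-t)\le\tfrac p2\cdot\tfrac{t}{1-t}\le\tfrac p2\cdot\tfrac43 t.
\end{equation*}
Hence
\begin{equation*}
W_2^2\le 4\alpha\cdot\tfrac{2p}{3}\cdot\tfrac{\alpha^2K^4}{4}=\tfrac23\alpha^3K^4p,
\end{equation*}
which is \eqref{eq:gaussian-smoothing}.

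I expect the main obstacle to be justifying Step 2 rigorously: the mixture identity and the exchange of expectations via Fubini. Both are harmless once Step 3 shows that all the exponential moments involved are finite.
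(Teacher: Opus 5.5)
Your proposal is correct and follows the paper's proof essentially step for step. Both arguments use Talagrand's inequality, then Jensen to pass to $\log\int r^2\,\mathrm d\gamma$ (your $\log(1+\chi^2)$), then the Gaussian-mixture identity giving $\mathbb E\exp(\tfrac{\alpha}{2}\xi^\top\xi')$, then two applications of the sub-Gaussian bound via Gaussian linearization to reach $(1-\alpha^2K^4/4)^{-p/2}$. The only difference is cosmetic: you work directly with $\mathcal N(0,2\alpha I_p)$, with Talagrand constant $4\alpha$, whereas the paper rescales to the standard Gaussian with $t=\sqrt{\alpha/2}$ and uses homogeneity of $W_2$ at the end.
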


\begin{remark}
For intuition, in one dimension, if $\xi$ itself were Gaussian (that is $\xi \sim N(0,K^2)$), then $(\sqrt{2\alpha}Z -\alpha \xi)  \sim N(0,2\alpha + \alpha^2K^2)$. 
For two centered one-dimensional Gaussians,
\begin{equation*}
W^2_2(N(0,\sigma_1^2),N(0,\sigma^2_2))=(\sigma_1-\sigma_2)^2.
\end{equation*}
In our setting this equals $(\sqrt{2\alpha+\alpha^2K^2}-\sqrt{2\alpha})^2$. 
For $\alpha K^2 \ll 1$, we then have %
$W^2_2= O(\alpha^{3}K^4)$.
Lemma~\ref{lem:gaussian-smoothing} gives the same order for any error satisfying Assumption~\ref{ass:subgaussian}. For fixed $K$ and $p$, the centered perturbation $-\alpha\xi(x)$ has $L^2$ norm of order at most $\alpha$, whereas its addition to the independent Gaussian increment changes the distribution by at most order $\alpha^{3/2}$ in $W_2$.
\end{remark}

\begin{remark}\label{remark:sync-coupling}
Under the synchronous Gaussian coupling (that uses the same Gaussian vector $Z$ in both increments), the difference between the noisy and exact increments is $-\alpha\xi(x)$. Its mean squared norm is bounded by $\alpha^2\mathbb E\|\xi(x)\|^2\le\alpha^2K^2p$. 
This loses one power of $\alpha$ compared with \eqref{eq:gaussian-smoothing} and, after accumulation over successive Euler steps,
yields a $W_2$ bound of order $\sqrt\alpha$ rather than order $\alpha$.
\end{remark}

We apply the coupling in Lemma~\ref{lem:gaussian-smoothing} conditionally on the current states. The increment difference has conditional mean zero, so its contribution enters the squared-distance recurrence at order $\alpha^3$. Combining this bound with contraction of the Euler drift yields the following uniform comparison.

\begin{restatable}[Drift error for Euler]
{theorem}{eulerfirstorder}
\label{thm:euler-first-order}
Suppose Assumptions~\ref{ass:convex}, \ref{ass:smooth}, and~\ref{ass:subgaussian} hold, the Euler biases satisfy $\mathbb E\|b_{k+1}\|^2\le\delta^2p$ for some $\delta\ge0$ and every $k\ge0$, and the initial states $X_0^{\mathrm{EU-N}}$ and $X_0^{\mathrm{EU}}$ have the same distribution. If $\alpha\le1/(m+M)$ and $\alpha K^2\le1$, then for every $k\ge0$,
\begin{equation}
W_2(\mathcal{L}(X_k^{\mathrm{EU-N}}),\mathcal{L}(X_k^{\mathrm{EU}})) \le \frac{\delta\sqrt p}{m}+\alpha K^2\sqrt{\frac{(m+M)p}{3mM}}.
\label{eq:euler-first-order}
\end{equation}
\end{restatable}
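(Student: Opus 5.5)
We construct a step-by-step coupling of the two chains and derive a recursion for $D_k^2=\mathbb E\|X_k^{\mathrm{EU-N}}-X_k^{\mathrm{EU}}\|^2$. We start with the same initial state, so $D_0=0$. Suppose $(X_k^{\mathrm{EU-N}},X_k^{\mathrm{EU}})=(x,y)$ has been constructed. We draw the exact increment $\sqrt{2\alpha}Z$ for the exact chain. For the noisy chain, we realize the increment $\sqrt{2\alpha}Z'-\alpha\xi(x)$ through an optimal coupling with $\sqrt{2\alpha}Z$, as in Lemma~\ref{lem:gaussian-smoothing}. We choose this coupling measurably in $x$, conditionally on the past. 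The conditional independence in Section~\ref{sec:noisy_drift} ensures that the noisy chain still has the correct law. Write $\eta=\sqrt{2\alpha}Z'-\alpha\xi(x)-\sqrt{2\alpha}Z$. Its conditional second moment is at most $\tfrac23\alpha^3K^4p$, and its conditional mean is zero because both increments are centered. This gives
\begin{equation*}
X_{k+1}^{\mathrm{EU-N}}-X_{k+1}^{\mathrm{EU}}=\big(x-y-\alpha(\nabla U(x)-\nabla U(y))\big)-\alpha b(x)+\eta.
\end{equation*}

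Next we expand the squared norm. The drift part $\Delta=x-y-\alpha(\nabla U(x)-\nabla U(y))$ satisfies the standard contraction $\|\Delta\|^2\le(1-2\alpha mM/(m+M))\|x-y\|^2$ for $\alpha\le 2/(m+M)$, using co-coercivity. Both $\Delta$ and $b(x)$ are functions of $(x,y)$, so the cross terms with $\eta$ vanish in expectation. We therefore get
\begin{equation*}
D_{k+1}^2\le \mathbb E\|\Delta-\alpha b_{k+1}\|^2+\tfrac23\alpha^3K^4p.
\end{equation*}
The bias term contributes at first order, so it cannot be handled inside a squared recursion without loss. We would therefore split the comparison with the triangle inequality in $L^2$. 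One option is an intermediate term; the cleaner option is to use Minkowski, $\|\Delta-\alpha b\|_{L^2}\le\|\Delta\|_{L^2}+\alpha\delta\sqrt p$. That alone mixes poorly with the $\eta$ term, so we instead use the inequality $\sqrt{a^2+c}$ form: set $E_k=\sqrt{D_k^2}$, $\rho=\sqrt{1-2\alpha mM/(m+M)}$. The recursion becomes
\begin{equation*}
E_{k+1}^2\le(\rho E_k+\alpha\delta\sqrt p)^2+\tfrac23\alpha^3K^4p.
\end{equation*}

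We now show by induction that $E_k\le A+B$, where $A=\delta\sqrt p/m$ and $B=\alpha K^2\sqrt{(m+M)p/(3mM)}$. The target $B$ is the fixed point of $B^2=\rho^2B^2+\tfrac23\alpha^3K^4p$, since $1-\rho^2=2\alpha mM/(m+M)$. The quantity $A$ dominates the fixed point of $A=\rho A+\alpha\delta\sqrt p$, because $1-\rho\ge(1-\rho^2)/2=\alpha mM/(m+M)$. Here we need $\alpha\delta\sqrt p/(1-\rho)\le \delta\sqrt p(m+M)/(mM)$. This gives only $\delta\sqrt p(1/m+1/M)$ and not $\delta\sqrt p/m$. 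The sharper $1/m$ rate should instead come from $\rho\le 1-m\alpha$, which holds for $\alpha\le 1/(m+M)$ because $\nabla U(x)-\nabla U(y)=H(x-y)$ with averaged Hessian-like operator having spectrum in $[m,M]$ in the sense of co-coercivity. We will use $\rho\le1-m\alpha$ for the linear bias term and $\rho^2\le 1-2\alpha mM/(m+M)$ for the quadratic noise term.

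The inductive step then requires
\begin{equation*}
(\rho(A+B)+\alpha\delta\sqrt p)^2+\tfrac23\alpha^3K^4p\le(A+B)^2.
\end{equation*}
We have $\rho A+\alpha\delta\sqrt p\le A$, so the left side is at most $(A+\rho B)^2+(1-\rho^2)B^2$. This is at most $(A+B)^2$ exactly when $2A\rho B+\rho^2B^2+(1-\rho^2)B^2\le 2AB+B^2$, which holds because $\rho\le1$.

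\textbf{Main obstacle.} The delicate step is the coupling itself. The noisy increment must be coupled conditionally on the current pair, with conditional mean-zero difference, so the $\alpha^3$ term enters without a first-order cross term. This requires that the optimal coupling preserves means; the difference of two centered variables is centered in any coupling.
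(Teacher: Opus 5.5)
Your overall route is the same as the paper's. You couple the increments conditionally on the current pair via Lemma~\ref{lem:gaussian-smoothing}, remove the cross term by conditional centering, handle the bias by Minkowski, and close $E_{k+1}^2\le(\rho E_k+\alpha\delta\sqrt p)^2+\tfrac23\alpha^3K^4p$ by induction on $E_k\le A+B$. Propagating one coupling instead of re-optimizing at each step, as the paper does, is harmless.

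The gap is the contraction factor. You set $\rho=\sqrt{1-2\alpha mM/(m+M)}$ and then assert $\rho\le 1-m\alpha$. As stated this is false on the whole range of the theorem: $\rho^2-(1-m\alpha)^2=m^2\alpha\bigl(2/(m+M)-\alpha\bigr)>0$ whenever $\alpha<2/(m+M)$. For example, $m=M=1$ and $\alpha=1/2$ give $\rho=1/\sqrt2>1/2$. What you actually need is that the drift map $x\mapsto x-\alpha\nabla U(x)$ is $(1-m\alpha)$-Lipschitz. Your ``averaged Hessian'' justification does not establish this, because $U$ is only assumed $C^1$ and no Hessian is available without a mollification or Rademacher-type argument that you do not supply.

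The missing step is one line, and it is exactly what the paper does: do not discard the gradient term in the co-coercivity expansion. You have
$\|\Delta\|^2\le(1-\tfrac{2mM\alpha}{m+M})\|x-y\|^2+(\alpha^2-\tfrac{2\alpha}{m+M})\|\nabla U(x)-\nabla U(y)\|^2$,
and the second coefficient is nonpositive. Strong convexity plus Cauchy--Schwarz give $\|\nabla U(x)-\nabla U(y)\|\ge m\|x-y\|$, and substituting yields exactly $\|\Delta\|\le(1-m\alpha)\|x-y\|$.

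Then take $\rho:=1-m\alpha$ throughout. Both properties your induction uses hold for this single $\rho$:
\begin{itemize}
\item $1-\rho=m\alpha$ gives $\rho A+\alpha\delta\sqrt p=A$;
\item $1-\rho^2=m\alpha(2-m\alpha)\ge 2\alpha mM/(m+M)$, using $\alpha\le 2/(m+M)$, gives $\tfrac23\alpha^3K^4p\le(1-\rho^2)B^2$ with your $B$.
\end{itemize}
With this correction your inductive step goes through verbatim and yields the stated bound. The paper proceeds the same way: it writes the noise fixed point as $S^2=2\alpha^2K^4p/(3m(2-m\alpha))$ and then bounds it by your $B^2$.
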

For unbiased evaluations, $\delta=0$ and the comparison is of order $\alpha$. More generally, the bound is of order $\alpha+\delta$ for fixed $m,M,p,K$.

Finite second moments for the noise alone do not suffice for this first-order comparison, even for a quadratic potential.

\begin{restatable}[Finite-variance counterexample]{proposition}{eulercounterexample}
\label{prop:euler-counterexample}
Let $U(x)=x^2/2$, so that $\pi=\mathcal N(0,1)$. At each noisy Euler step, use the unbiased drift evaluation $x+\xi$, with independent errors $\xi$ having density $p_{\xi}(z)=\frac54|z|^{-7/2}\mathbf 1_{\{|z|\ge1\}}$, which satisfy $\mathbb E\xi=0$ and $\mathbb E\xi^2=5$. Start both exact and noisy gradient chains from $\pi$. For every integer $n\ge64$ and $\alpha=1/(2n)$,
\begin{equation}\label{eq:euler-finite-variance-counterexample}
\begin{aligned}
W_2(\mathcal L(X_n^{\mathrm{EU-N}}),\mathcal L(X_n^{\mathrm{EU}}))&\ge\frac{\alpha^{7/8}}{32},\\
W_2(\mathcal L(X_n^{\mathrm{EU-N}}),\pi)&\ge\frac{\alpha^{7/8}}{32}.
\end{aligned}
\end{equation}
In contrast, $W_2(\mathcal L(X_n^{\mathrm{EU}}),\pi)\le\alpha/2$.
\end{restatable}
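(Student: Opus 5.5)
Since $U(x)=x^2/2$, both chains are linear and their laws can be written explicitly. With $\rho=1-\alpha$,
\begin{equation*}
X_n^{\mathrm{EU}}=\rho^nX_0+\sqrt{2\alpha}\sum_{j=1}^n\rho^{n-j}Z_j,\qquad
X_n^{\mathrm{EU-N}}=X_n'-\alpha S_n,\quad S_n=\sum_{j=1}^n\rho^{n-j}\xi_j,
\end{equation*}
where $X_n'$ has the same law as $X_n^{\mathrm{EU}}$ and is independent of $S_n$. The exact chain is Gaussian, $\mathcal N(0,v_n)$, and $v_n\to 1/(1-\alpha/2)$. The bound $W_2(\mathcal L(X_n^{\mathrm{EU}}),\pi)\le\alpha/2$ then follows from the one-dimensional Gaussian formula $|\sqrt{v_n}-1|$. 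Indeed, $v_n$ is a convex combination of $1$ and $1/(1-\alpha/2)$, so $\sqrt{v_n}-1\le (1-\alpha/2)^{-1/2}-1\le\alpha/2$ for $\alpha\le1/128$.

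For the lower bounds, I will avoid computing $W_2$ exactly. Instead I will use a fourth-moment-type or tail-based lower bound that separates a heavy-tailed perturbation from a Gaussian. The key mechanism is that $\xi$ has tail $\mathbb P(|\xi|>t)=\tfrac12 t^{-5/2}$ for $t\ge1$. Over $n=1/(2\alpha)$ steps, the probability that some $|\xi_j|$ exceeds a level $t$ is about $n t^{-5/2}$. A single such jump displaces the noisy chain by about $\alpha t\rho^{n-j}\ge \alpha t e^{-1}$, since $\rho^{n}\approx e^{-1/2}$. Choose $t$ so that the displacement $\alpha t$ is of order $1$ relative to the Gaussian scale. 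Concretely, I will show that $\mathcal L(X_n^{\mathrm{EU-N}})$ puts mass at least $c\,n(\alpha t)^{-5/2}\alpha^{5/2}\cdot$ on $\{|x|\ge R\}$ for a fixed large $R$ (say $R=8$), well above the Gaussian mass there. Alternatively and more cleanly, I will use the lower bound
\begin{equation*}
W_2(\mu,\nu)\ge\Big(\mathbb E_\mu (|X|-R)_+^2\Big)^{1/2}-\Big(\mathbb E_\nu(|Y|-R)_+^2\Big)^{1/2}.
\end{equation*}
This holds because $x\mapsto(|x|-R)_+$ is 1-Lipschitz, so under any coupling the $L^2$ norms of $f(X)$ and $f(Y)$ differ by at most $\|X-Y\|_{L^2}$.

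The main step is to lower bound $\mathbb E(|X_n^{\mathrm{EU-N}}|-R)_+^2$. I will restrict to the event $A_j$ on which $|\xi_j|\ge 2R e/\alpha$ for exactly one $j$, and on which the remaining contribution $X_n'-\alpha\sum_{i\neq j}\rho^{n-i}\xi_i$ has magnitude at most $R/2$. The second requirement holds with probability at least $1/2$ by Chebyshev, since that remainder has variance $O(1)$ while $R$ is large. On $A_j$ we have $|X_n^{\mathrm{EU-N}}|-R\ge \alpha\rho^{n}|\xi_j|/2$. Integrating the tail $\int_{s}^\infty t^2\cdot t^{-7/2}\,dt\asymp s^{-1/2}$ with $s\asymp 1/\alpha$, each $j$ contributes order $\alpha^2\cdot\alpha^{1/2}$, and summing over $n\asymp 1/\alpha$ indices gives order $\alpha^{3/2}$. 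The square root is then of order $\alpha^{3/4}$, which is at least $\alpha^{7/8}/32$ after the constants are tracked. The Gaussian term $\mathbb E(|Y|-R)_+^2$ is $e^{-R^2/3}$-small. That makes it negligible only relative to a constant, not relative to $\alpha^{3/4}$, so I will instead let $R$ grow as $R=c\sqrt{\log(1/\alpha)}$. This is the main obstacle: balancing $R$ against $\alpha$ is presumably what degrades the natural $\alpha^{3/4}$ to the stated $\alpha^{7/8}$, with $n\ge64$ ensuring the logs and constants work out. The same argument applies against $\pi=\mathcal N(0,1)$ in place of $\mathcal L(X_n^{\mathrm{EU}})$, which gives the second inequality.
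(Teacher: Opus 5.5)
Your argument is correct in substance and takes a genuinely different route from the paper's. Both proofs start from the synchronous representation $X_n^{\mathrm{EU-N}}=X_n^{\mathrm{EU}}-\alpha\sum_j(1-\alpha)^{n-j}\xi_j$. Your proof of $W_2(\mathcal L(X_n^{\mathrm{EU}}),\pi)\le\alpha/2$ is essentially the paper's.

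For the lower bounds, the paper works at the far threshold $|x|>2\alpha^{-1/2}$. A sign-flip argument conditions on $X_n^{\mathrm{EU}}$ and $|\xi_1|,\ldots,|\xi_n|$ and uses the symmetry of $p_\xi$. It shows that one error with $|\xi_j|>4\alpha^{-3/2}$ carries the chain past that threshold with conditional probability at least $1/2$, whatever the remainder does. Then $\mathbb E|X_n^{\mathrm{EU-N}}-Y|^2\ge\alpha^{-1}[\Pr(|X_n^{\mathrm{EU-N}}|>2\alpha^{-1/2})-\Pr(|Y|>\alpha^{-1/2})]$ for any coupling, and the Gaussian tail $2e^{-1/(4\alpha)}$ is trivially negligible. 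This needs no variance control of the remainder and no tuning of a threshold, and the constant $1/32$ falls out directly. However, the threshold sits at $2\alpha^{-1/2}$ rather than at the scale $\sqrt{\log(1/\alpha)}$ where the Gaussian tail first becomes negligible, so the argument only reaches $\alpha^{7/8}$.

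Your approach uses the $1$-Lipschitz test function $(|x|-R)_+$ with $R\asymp\sqrt{\log(1/\alpha)}$, Chebyshev on the remainder, and integration of $\xi_j^2$ over the tail. It captures $W_2^2\gtrsim n\alpha^2\,\mathbb E[\xi^2;|\xi|\gtrsim R/\alpha]$ and yields the sharper order $\alpha^{3/4}(\log(1/\alpha))^{-1/8}$, without using the symmetry of $\xi$. So your guess that balancing $R$ against $\alpha$ is what degrades $\alpha^{3/4}$ to $\alpha^{7/8}$ is mistaken. The balancing costs only a factor $R^{-1/4}$; the exponent $7/8$ comes from the paper's far-out threshold.

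The price of your route is the constant bookkeeping you deferred. This matters because the statement asks for the explicit factor $1/32$ for every $n\ge64$. It does go through, for example with $R=\max\{5,2\sqrt{\log(1/\alpha)}\}$. At $\alpha=1/128$, your crude constants (with $(1-\alpha)^{n-j}\ge e^{-1}$ and the Chebyshev event of probability at least $0.83$) give roughly $1.5\times10^{-3}$ for $(\mathbb E(|X_n^{\mathrm{EU-N}}|-R)_+^2)^{1/2}$. The Gaussian term is at most $2.3\times10^{-4}$, and the target is about $4.5\times10^{-4}$. The same computation keeps a margin of more than a factor of two for all smaller $\alpha$.

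When writing it out, fix three slips:
\begin{itemize}
\item $\Pr(|\xi|>t)=t^{-5/2}$; your $\tfrac12t^{-5/2}$ is the one-sided tail.
\item With the cutoff $|\xi_j|\ge2Re/\alpha$ you only get $|X_n^{\mathrm{EU-N}}|-R\ge\tfrac14\alpha(1-\alpha)^{n-j}|\xi_j|$; use $3Re/\alpha$ to get $\tfrac12$.
\item Since the cutoff is of order $R/\alpha$ rather than $1/\alpha$, each index contributes order $\alpha^{5/2}R^{-1/2}$, not $\alpha^{5/2}$.
\end{itemize}
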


Thus, at the fixed time $n\alpha=1/2$, neither the distance between the two chains nor the total sampling error admits an order-$\alpha$ bound under a second-moment assumption alone. The proof is given in Appendix~\ref{app:euler-case}.

Theorem~\ref{thm:euler-first-order} bounds the distance between the noisy and exact Euler chains. Adding the distance from the exact chain to $\pi$, as bounded in Corollary~\ref{cor:euler-rate}, gives the overall sampling error.

\begin{corollary}[Overall error of noisy Euler]\label{cor:noisy-euler-overall}
Under the assumptions of Theorem~\ref{thm:euler-first-order}, suppose further that $U\in C^2(\mathbb R^p)$. Then, for every $k\ge0$,
\begin{align*}
W_2(\mathcal{L}(X_k^{\mathrm{EU-N}}),\pi)
\le{}&(1-m\alpha)^kW_2(\mathcal{L}(X_0^{\mathrm{EU-N}}),\pi)+12\alpha M\sqrt{\frac pm}\\
&+\frac{\delta\sqrt p}{m}+\alpha K^2\sqrt{\frac{(m+M)p}{3mM}}.
\end{align*}
\end{corollary}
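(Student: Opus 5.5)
The corollary follows from the triangle inequality for $W_2$, splitting at the law of the exact-gradient Euler chain started from the same initial distribution. Let $X_0^{\mathrm{EU}}$ have the same law as $X_0^{\mathrm{EU-N}}$, and run the exact chain~\eqref{eq:euler}. Then for every $k\ge0$,
\begin{equation*}
W_2(\mathcal L(X_k^{\mathrm{EU-N}}),\pi)\le W_2(\mathcal L(X_k^{\mathrm{EU-N}}),\mathcal L(X_k^{\mathrm{EU}}))+W_2(\mathcal L(X_k^{\mathrm{EU}}),\pi).
\end{equation*}
Since $W_2$ is a metric on probability measures with finite second moments, this step needs only finite second moments of both laws. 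These hold by induction: the initial law has a finite second moment, $\nabla U$ grows at most linearly by Assumption~\ref{ass:smooth}, and $b_{k+1}$ and $\xi_{k+1}$ have finite second moments by hypothesis and by Assumption~\ref{ass:subgaussian}.

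The first term is bounded by Theorem~\ref{thm:euler-first-order}, which gives $\delta\sqrt p/m+\alpha K^2\sqrt{(m+M)p/(3mM)}$ under exactly the standing hypotheses. For the second term I apply Corollary~\ref{cor:euler-rate}. It requires Assumptions~\ref{ass:convex} and~\ref{ass:smooth}, $U\in C^2$ (added in the corollary's statement), and $\alpha\le1/M$. The step-size condition follows from $\alpha\le1/(m+M)\le1/M$. Corollary~\ref{cor:euler-rate} then yields $(1-m\alpha)^kW_2(\mathcal L(X_0^{\mathrm{EU}}),\pi)+12\alpha M\sqrt{p/m}$. Since $\mathcal L(X_0^{\mathrm{EU}})=\mathcal L(X_0^{\mathrm{EU-N}})$, the initialization term is the one in the stated bound. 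Summing the two estimates gives the claim.

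There is no substantive obstacle. The only points to verify are that the hypotheses of the two results are compatible, specifically the step-size conditions $\alpha\le 1/(m+M)$ implying $\alpha\le1/M$, and that the auxiliary exact chain can be started from the same initial law. The latter is by construction.
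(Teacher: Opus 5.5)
Your proposal is correct and matches the paper's proof: the triangle inequality through $\mathcal L(X_k^{\mathrm{EU}})$, with the chain-to-chain term bounded by Theorem~\ref{thm:euler-first-order} and the exact-chain term by Corollary~\ref{cor:euler-rate}, using $\alpha\le1/(m+M)\le1/M$. Your remark on finite second moments is a harmless extra check that the paper leaves implicit.
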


\begin{proof}[Proof of Corollary~\ref{cor:noisy-euler-overall}]
The step-size condition gives $\alpha\le1/(m+M)\le1/M$. Apply the triangle inequality through $\mathcal L(X_k^{\mathrm{EU}})$. Bound the distance between the two chains by Theorem~\ref{thm:euler-first-order} and the distance from the exact chain to $\pi$ by Corollary~\ref{cor:euler-rate}.
\end{proof}

For unbiased evaluations, or when $\delta=O(\alpha)$, both the drift-approximation and discretization terms in Corollary~\ref{cor:noisy-euler-overall} are of order $\alpha$.
Randomized midpoint also admits an order-$\alpha$ discretization bound by Lemma~\ref{lem:rm-rate}. We next establish the corresponding noisy-exact comparison when both evaluations in each RM step are approximate.

\subsection{Error rate for randomized midpoint with noisy drift}\label{sec:rm-case}

The randomized-midpoint update~\eqref{eq:rm-update} evaluates the drift twice. At both stages, we use the drift approximation in~\eqref{eq:oracle-decomposition}, with centered errors satisfying Assumption~\ref{ass:subgaussian}.
Denote the noisy chain and its intermediate states by $X_k^{\mathrm{RM-N}}$ and $Y_k^{\mathrm{RM-N}}$. For the update from step $k$ to $k+1$, write
\begin{align*}
b_{k+1}^{(1)}&=b(X_k^{\mathrm{RM-N}}),\quad
\xi_{k+1}^{(1)}=\xi(X_k^{\mathrm{RM-N}}),\\
b_{k+1}^{(2)}&=b(Y_k^{\mathrm{RM-N}}),\quad
\xi_{k+1}^{(2)}=\xi(Y_k^{\mathrm{RM-N}}).
\end{align*}
Here the superscripts distinguish the two evaluations. The resulting noisy update is
\begin{equation}\label{eq:noisy-rm-update}
\begin{aligned}
Y_k^{\mathrm{RM-N}}&=X_k^{\mathrm{RM-N}}-\alpha\tau_{k+1}(\nabla U(X_k^{\mathrm{RM-N}})+b_{k+1}^{(1)}+\xi_{k+1}^{(1)})+\sqrt{2\alpha\tau_{k+1}}Z'_{k+1},\\
X_{k+1}^{\mathrm{RM-N}}&=X_k^{\mathrm{RM-N}}-\alpha(\nabla U(Y_k^{\mathrm{RM-N}})+b_{k+1}^{(2)}+\xi_{k+1}^{(2)})\\
&\quad+\sqrt{2\alpha}(\sqrt{\tau_{k+1}}Z'_{k+1}+\sqrt{1-\tau_{k+1}}Z''_{k+1}).
\end{aligned}
\end{equation}
Suppose that, for some $\delta\ge0$,
\begin{equation}\label{eq:stagewise-bias-bound}
\mathbb E\|b_{k+1}^{(j)}\|^2\le\delta^2p,\qquad j=1,2,\quad k\ge0.
\end{equation}

The error in the first drift evaluation perturbs the midpoint by a quantity proportional to $\alpha$. By smoothness, its effect on the terminal drift update gains another factor of $\alpha$. The main difficulty is the centered error $\xi_{k+1}^{(2)}$ in the second evaluation: it enters the terminal update directly and, through its query point $Y_k^{\mathrm{RM-N}}$, need not be independent of the Gaussian increment in that step. To apply Lemma~\ref{lem:gaussian-smoothing}, we pair this error with the independent Gaussian increment in the next step.

Write $Z_{k+1}=\sqrt{\tau_{k+1}}Z'_{k+1}+\sqrt{1-\tau_{k+1}}Z''_{k+1}$ for the standard Gaussian vector in the terminal update of~\eqref{eq:noisy-rm-update}. 
To remove the current second-stage centered error from the terminal update,
define
\begin{equation}\label{eq:bar-X-definition}
\bar X_{k+1}:=X_k^{\mathrm{RM-N}}-\alpha\nabla U(Y_k^{\mathrm{RM-N}})-\alpha b_{k+1}^{(2)}+\sqrt{2\alpha}Z_{k+1}=X_{k+1}^{\mathrm{RM-N}}+\alpha\xi_{k+1}^{(2)},
\end{equation}
with $\bar X_0=X_0^{\mathrm{RM-N}}$ and $\xi_0^{(2)}=0$. This change costs at most
\begin{equation*}
W_2(\mathcal L(X_k^{\mathrm{RM-N}}),\mathcal L(\bar X_k))\le\alpha(\mathbb E\|\xi_k^{(2)}\|^2)^{1/2}\le\alpha K\sqrt p.
\end{equation*}
Thus, an order-$(\alpha+\delta)$ comparison between $\bar X_k$ and $X_k^{\mathrm{RM}}$ yields the desired comparison for the noisy chain. For $k\ge1$, $\bar X_k$ is determined before $\xi_k^{(2)}$ is sampled, so this error remains centered and sub-Gaussian conditional on $\bar X_k$.

To express the update in terms of $\bar X_k$, substitute $X_k^{\mathrm{RM-N}}=\bar X_k-\alpha\xi_k^{(2)}$ and obtain
\begin{equation*}
\bar X_{k+1}=\bar X_k-\alpha\nabla U(Y_k^{\mathrm{RM-N}})-\alpha b_{k+1}^{(2)}+(-\alpha\xi_k^{(2)}+\sqrt{2\alpha}Z_{k+1}).
\end{equation*}
The noise term in parentheses pairs the centered error from the preceding update with the independent Gaussian increment from the current update. For $\alpha K^2\le1$, applying Lemma~\ref{lem:gaussian-smoothing} conditionally on $\bar X_k$ gives
\begin{equation*}
W_2^2\left(\mathcal L\left(-\alpha\xi_k^{(2)}+\sqrt{2\alpha}Z_{k+1}\mid\bar X_k\right),\mathcal N(0,2\alpha I_p)\right)\le\frac23\alpha^3K^4p\quad\text{a.s.}
\end{equation*}
Let $\widetilde Z_{k+1}$ denote the standard Gaussian vector used in the terminal update of an exact randomized-midpoint step from $\bar X_k$, so that $\mathcal L(\widetilde Z_{k+1}\mid\bar X_k)=\mathcal N(0,I_p)$. We couple $\sqrt{2\alpha}\widetilde Z_{k+1}$ with the noise term in parentheses so that their difference satisfies
\begin{equation}\label{eq:T1-coupling}
T_{1,k}:=-\alpha\xi_k^{(2)}+\sqrt{2\alpha}(Z_{k+1}-\widetilde Z_{k+1}),\quad\mathbb E[T_{1,k}\mid\bar X_k]=0,\quad\mathbb E\|T_{1,k}\|^2\le\frac23\alpha^3K^4p.
\end{equation}
The full conditional construction is given in Appendix~\ref{app:rm-case}.

To separate the perturbation of the update from the difference in starting states, we compare $\bar X_{k+1}$ with an exact randomized-midpoint step $\widecheck X_{k+1}$ starting from $\bar X_k$. This step must preserve the dependence between the midpoint and terminal Gaussian increments. Set
\begin{equation*}
Z'''_{k+1}=\sqrt{1-\tau_{k+1}}Z'_{k+1}-\sqrt{\tau_{k+1}}Z''_{k+1}.
\end{equation*}
This orthogonal transformation gives two independent standard Gaussian vectors $Z_{k+1}$ and $Z'''_{k+1}$, whose joint distribution does not depend on $\tau_{k+1}$, and the inverse relation is
\begin{equation*}
\sqrt{2\alpha\tau_{k+1}}Z'_{k+1}=\sqrt{2\alpha}\tau_{k+1}Z_{k+1}+\sqrt{2\alpha\tau_{k+1}(1-\tau_{k+1})}Z'''_{k+1}.
\end{equation*}
We choose the preceding coupling independently of $\tau_{k+1}$ and $Z'''_{k+1}$. Keeping these two variables and replacing $Z_{k+1}$ by $\widetilde Z_{k+1}$ therefore gives the exact midpoint and terminal updates
\begin{align}
\widecheck Y_k &= \bar X_k-\alpha\tau_{k+1}\nabla U(\bar X_k)+\sqrt{2\alpha}\tau_{k+1}\widetilde Z_{k+1}+\sqrt{2\alpha\tau_{k+1}(1-\tau_{k+1})}Z'''_{k+1},\label{eq:check-Y}\\
\widecheck X_{k+1} &= \bar X_k-\alpha\nabla U(\widecheck Y_k)+\sqrt{2\alpha}\widetilde Z_{k+1}.\notag
\end{align}
Subtracting this terminal update from~\eqref{eq:bar-X-definition} leaves $T_{1,k}$, together with the terms arising from the different midpoint drift evaluations and the terminal bias. The following lemma collects these latter two contributions in $T_{2,k}$ and bounds the difference between $\bar X_{k+1}$ and $\widecheck X_{k+1}$.

\begin{restatable}[One-step control of the randomized-midpoint perturbation]{lemma}{randomizedmidpointcontrol}\label{lem:randomized-midpoint-control}
Suppose Assumptions~\ref{ass:convex}, \ref{ass:smooth}, and~\ref{ass:subgaussian} and the bias bounds~\eqref{eq:stagewise-bias-bound} hold. Use the coupling in~\eqref{eq:T1-coupling} and the exact step~\eqref{eq:check-Y}, and set
\begin{equation*}
T_{2,k}=-\alpha(\nabla U(Y_k^{\mathrm{RM-N}})-\nabla U(\widecheck Y_k))-\alpha b_{k+1}^{(2)}.
\end{equation*}
Then
\begin{equation*}
\bar X_{k+1}-\widecheck X_{k+1}=T_{1,k}+T_{2,k}.
\end{equation*}
If $\alpha M\le1$ and $\alpha K^2\le1$, then
\begin{equation*}
(\mathbb E\|T_{2,k}\|^2)^{1/2}
\le\sqrt{\frac23}M\alpha^{5/2}K^2\sqrt p+\sqrt2M\alpha^2K\sqrt p+\alpha\delta\sqrt p(1+M\alpha).
\end{equation*}
\end{restatable}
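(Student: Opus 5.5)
The identity $\bar X_{k+1}-\widecheck X_{k+1}=T_{1,k}+T_{2,k}$ follows by direct subtraction. Subtract the terminal update of $\widecheck X_{k+1}$ from the rewritten update
\begin{equation*}
\bar X_{k+1}=\bar X_k-\alpha\nabla U(Y_k^{\mathrm{RM-N}})-\alpha b_{k+1}^{(2)}-\alpha\xi_k^{(2)}+\sqrt{2\alpha}Z_{k+1}.
\end{equation*}
The common $\bar X_k$ cancels. The noise terms $-\alpha\xi_k^{(2)}+\sqrt{2\alpha}(Z_{k+1}-\widetilde Z_{k+1})$ are exactly $T_{1,k}$ by~\eqref{eq:T1-coupling}. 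What remains is the drift difference and the terminal bias, which is $T_{2,k}$.

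For the bound, Minkowski's inequality gives
\begin{equation*}
(\mathbb E\|T_{2,k}\|^2)^{1/2}\le\alpha M(\mathbb E\|Y_k^{\mathrm{RM-N}}-\widecheck Y_k\|^2)^{1/2}+\alpha\delta\sqrt p,
\end{equation*}
using Assumption~\ref{ass:smooth} and~\eqref{eq:stagewise-bias-bound}. The main work is bounding the midpoint discrepancy. I would express $Y_k^{\mathrm{RM-N}}$ in terms of $\bar X_k$ and the variables $\tau_{k+1}$, $Z_{k+1}$, $Z'''_{k+1}$. Substitute $X_k^{\mathrm{RM-N}}=\bar X_k-\alpha\xi_k^{(2)}$ and the inverse relation for $\sqrt{2\alpha\tau_{k+1}}Z'_{k+1}$. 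Subtracting~\eqref{eq:check-Y} then gives
\begin{align*}
Y_k^{\mathrm{RM-N}}-\widecheck Y_k={}&-\alpha\xi_k^{(2)}-\alpha\tau_{k+1}\bigl(\nabla U(X_k^{\mathrm{RM-N}})-\nabla U(\bar X_k)\bigr)-\alpha\tau_{k+1}b_{k+1}^{(1)}\\
&-\alpha\tau_{k+1}\xi_{k+1}^{(1)}+\sqrt{2\alpha}\,\tau_{k+1}(Z_{k+1}-\widetilde Z_{k+1}).
\end{align*}
The first and last terms recombine as
\begin{equation*}
(1-\tau_{k+1})(-\alpha\xi_k^{(2)})+\tau_{k+1}T_{1,k}.
\end{equation*}
I expect this regrouping to be the crux. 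Naively, $\sqrt{2\alpha}(Z_{k+1}-\widetilde Z_{k+1})$ is only order $\alpha$ in $L^2$, and it cancels against $-\alpha\xi_k^{(2)}$ only in the combination $T_{1,k}$.

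Each piece is then bounded in $L^2$, using $\tau_{k+1}\in(0,1)$ and its independence from the coupling. The term $\tau_{k+1}T_{1,k}$ contributes at most $\sqrt{2/3}\,\alpha^{3/2}K^2\sqrt p$. After multiplication by $\alpha M$, this yields the first term of the lemma.

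The centered errors $-(1-\tau_{k+1})\alpha\xi_k^{(2)}$ and $-\tau_{k+1}\alpha\xi_{k+1}^{(1)}$ are each bounded by $\alpha K\sqrt p$ via Assumption~\ref{ass:subgaussian}. They should be combined orthogonally: $\xi_{k+1}^{(1)}$ is conditionally centered given everything generated before it, including $\xi_k^{(2)}$ and $\tau_{k+1}$. This gives a total of at most
\begin{equation*}
\alpha K\sqrt p\,\bigl(\mathbb E[(1-\tau)^2+\tau^2]\bigr)^{1/2}\le\sqrt2\,\alpha K\sqrt p,
\end{equation*}
which matches the second term after multiplying by $\alpha M$. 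If the cross term with the $\nabla U$-difference term is troublesome, I would instead bound crudely by $\sqrt2$ through $(a+b)^2\le2a^2+2b^2$.

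The gradient term $\alpha\tau_{k+1}(\nabla U(X_k^{\mathrm{RM-N}})-\nabla U(\bar X_k))$ is at most $\alpha M\cdot\alpha\|\xi_k^{(2)}\|$. Because $\alpha M\le1$, I would absorb it into the $\xi_k^{(2)}$ contribution; handling this absorption within the constants $\sqrt2$ requires some care. The bias term gives $\alpha\delta\sqrt p$, which after multiplication by $\alpha M$ produces the $\alpha\delta\sqrt p\cdot M\alpha$ part of the last term. Collecting the pieces gives the stated bound.
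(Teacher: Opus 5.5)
Your proposal is correct and follows the paper's proof essentially step for step: the same subtraction for the identity, the same regrouping $-\alpha\xi_k^{(2)}+\sqrt{2\alpha}\,\tau(Z_{k+1}-\widetilde Z_{k+1})=\tau T_{1,k}-\alpha(1-\tau)\xi_k^{(2)}$, Minkowski for $\tau T_{1,k}$ and the biases, and orthogonality of $\tau\xi_{k+1}^{(1)}$ from conditional centering. The absorption you left open is done pointwise in the paper, $\|(1-\tau)\xi_k^{(2)}+\tau(\nabla U(X_k^{\mathrm{RM-N}})-\nabla U(\bar X_k))\|\le(1-\tau+M\alpha\tau)\|\xi_k^{(2)}\|\le\|\xi_k^{(2)}\|$. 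The cross term with $\tau\xi_{k+1}^{(1)}$ then vanishes, since the gradient difference is a function of $\bar X_k,\xi_k^{(2)}$, and the bracket has second moment at most $2K^2p$. Your Minkowski and $(a+b)^2\le2a^2+2b^2$ fallbacks also close at $\sqrt2$, but only if you use $\mathbb E(1-\tau)^2=\mathbb E\tau^2=1/3$ (with $\tau$ independent of $\xi_k^{(2)}$) instead of your loose bound $\mathbb E[(1-\tau)^2+\tau^2]\le2$.
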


Next, to propagate the one-step comparison between $\bar X_{k+1}$ and $X_{k+1}^{\mathrm{RM}}$, we insert $\widecheck X_{k+1}$ between the two states:
\begin{equation*}
\bar X_{k+1}-X_{k+1}^{\mathrm{RM}}
=(\bar X_{k+1}-\widecheck X_{k+1})+(\widecheck X_{k+1}-X_{k+1}^{\mathrm{RM}}).
\end{equation*}
Lemma~\ref{lem:randomized-midpoint-control} controls the first difference. The second term compares two exact-drift steps from $\bar X_k$ and $X_k^{\mathrm{RM}}$, using the same randomized time and Gaussian increments. The following contraction bound controls the effect of their different starting states.

\begin{restatable}[Exact randomized-midpoint contraction]{lemma}{exactrmcontraction}\label{lem:exact-rm-contraction}
Suppose Assumptions~\ref{ass:convex} and~\ref{ass:smooth} hold.
Let $(Y_k^{\mathrm{RM}},X_{k+1}^{\mathrm{RM}})$ and $(\widecheck Y_k,\widecheck X_{k+1})$ be exact randomized-midpoint steps from $X_k^{\mathrm{RM}}$ and $\bar X_k$, respectively. Couple them using the same randomized time and Gaussian increments in~\eqref{eq:rm-update}. If $\alpha\le1/(m+M)$, then
\begin{equation*}
\|\widecheck X_{k+1}-X_{k+1}^{\mathrm{RM}}\|^2 \le\left(1-\frac{2mM\alpha}{m+M}(1-M\alpha)^2+(m+M)M^2\alpha^3\right)\|\bar X_k-X_k^{\mathrm{RM}}\|^2.
\end{equation*}
If $\alpha\le1/(8M\sqrt\kappa)$, the factor on the right-hand side is at most $1-(47/64)m\alpha$.
\end{restatable}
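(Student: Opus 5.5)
Write $e=\bar X_k-X_k^{\mathrm{RM}}$ and use the synchronous coupling: the same $\tau=\tau_{k+1}$, $\widetilde Z_{k+1}$ and $Z'''_{k+1}$ drive both exact steps. Then the Gaussian parts cancel and the midpoint difference is
\[
\widecheck Y_k-Y_k^{\mathrm{RM}}=e-\alpha\tau\,g_0,\qquad g_0:=\nabla U(\bar X_k)-\nabla U(X_k^{\mathrm{RM}}).
\]
The terminal difference is
\[
\widecheck X_{k+1}-X_{k+1}^{\mathrm{RM}}=e-\alpha g_1,\qquad g_1:=\nabla U(\widecheck Y_k)-\nabla U(Y_k^{\mathrm{RM}}).
\]
The bound is deterministic given $\tau$. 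I will expand
\[
\|e-\alpha g_1\|^2=\|e\|^2-2\alpha\, e^\top g_1+\alpha^2\|g_1\|^2
\]
and split $g_1$ around the point where the midpoint contraction acts.

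The natural decomposition writes $e=d+\alpha\tau g_0$, where $d=\widecheck Y_k-Y_k^{\mathrm{RM}}$. Then
\[
-2\alpha e^\top g_1+\alpha^2\|g_1\|^2=-2\alpha d^\top g_1+\alpha^2\|g_1\|^2-2\alpha^2\tau g_0^\top g_1.
\]
For the pair $(d,g_1)$ I apply the co-coercivity inequality for $m$-strongly convex, $M$-smooth functions:
\[
d^\top g_1\ge \frac{mM}{m+M}\|d\|^2+\frac{1}{m+M}\|g_1\|^2 .
\]
Since $\alpha\le 1/(m+M)$, this makes $-2\alpha d^\top g_1+\alpha^2\|g_1\|^2\le -\frac{2mM\alpha}{m+M}\|d\|^2$, with the leftover $\|g_1\|^2$ term nonpositive.

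Next I need $\|d\|\ge(1-M\alpha)\|e\|$. This follows from $\|d\|\ge \|e\|-\alpha\tau\|g_0\|\ge(1-\alpha M)\|e\|$, using $\tau\le1$ and $\alpha M\le1$. That produces the factor $\frac{2mM\alpha}{m+M}(1-M\alpha)^2$.

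The remaining cross term $-2\alpha^2\tau g_0^\top g_1$ is the main obstacle. A crude Cauchy--Schwarz bound gives order $\alpha^2M^2\|e\|^2$, which is too big, since the target error is $(m+M)M^2\alpha^3$. I would therefore keep part of the negative $\|g_1\|^2$ term instead of discarding it. Writing $g_1=g_0+(g_1-g_0)$ gives
\[
-2\alpha^2\tau g_0^\top g_1=-2\alpha^2\tau\|g_0\|^2-2\alpha^2\tau g_0^\top(g_1-g_0).
\]
Here $\|g_1-g_0\|\le M\|\widecheck Y_k-\bar X_k-(Y_k^{\mathrm{RM}}-X_k^{\mathrm{RM}})\|=M\alpha\tau\|g_0\|$, because the Gaussian parts cancel. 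The second piece is therefore at most $2\alpha^3\tau^2M\|g_0\|^2\le 2\alpha^3 M^3\|e\|^2$.

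That gives $2M^3\alpha^3$, not $(m+M)M^2\alpha^3$. To reach the stated constant, I would instead use Young's inequality against the spare term $-\frac{\alpha}{m+M}\|g_1\|^2+\alpha^2\|g_1\|^2$. Equivalently, I bound $-2\alpha^2\tau g_0^\top g_1\le \frac{\alpha}{m+M}\|g_1\|^2\cdot c+\alpha^3(m+M)\tau^2\|g_0\|^2/c$ and absorb the first piece. This leaves $(m+M)M^2\alpha^3\|e\|^2$, after using $-2\alpha^2\tau\|g_0\|^2\le0$ where needed. Tuning how the $\|g_1\|^2$ budget is shared (possibly keeping $-\alpha(\frac{2}{m+M}-\alpha)\|g_1\|^2\le-\frac{\alpha}{m+M}\|g_1\|^2$) is the delicate bookkeeping, and I expect it to be the step that fixes the constant.

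For the second claim, set $\alpha\le 1/(8M\sqrt\kappa)$, so $M\alpha\le 1/8$. Then $(1-M\alpha)^2\ge 49/64$, and $\frac{2mM}{m+M}\ge m$ since $M\ge m$. This gives the contraction term $\le -\frac{49}{64}m\alpha$. The error term satisfies $(m+M)M^2\alpha^3\le 2M^3\alpha^3=2m\alpha\cdot\kappa M^2\alpha^2\le 2m\alpha/64$. Combining gives $1-\frac{47}{64}m\alpha$.
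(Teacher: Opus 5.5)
Your final argument matches the paper's proof. It uses the decomposition $e=d+\alpha\tau g_0$, co-coercivity on $(d,g_1)$, the bound $\|d\|\ge(1-M\alpha)\|e\|$, Young's inequality with weight $m+M$ (your $c=1$), and the same $47/64$ computation. The Young step needs no tuning: absorbing its $\frac{\alpha}{m+M}\|g_1\|^2$ piece leaves the net coefficient $\alpha^2-\frac{\alpha}{m+M}\le 0$.

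Your discarded detour via $g_1=g_0+(g_1-g_0)$ is invalid, not just lossy, so remove it rather than keep it as an alternative. An $M$-Lipschitz gradient does not bound the second difference $\nabla U(\widecheck Y_k)-\nabla U(\bar X_k)-\nabla U(Y_k^{\mathrm{RM}})+\nabla U(X_k^{\mathrm{RM}})$ by $M\alpha\tau\|g_0\|$ when $U$ is not quadratic; it can be of order $(M-m)\|e\|$.
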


We combine the perturbation bound in Lemma~\ref{lem:randomized-midpoint-control} with the contraction in Lemma~\ref{lem:exact-rm-contraction} to obtain a recurrence for $W_2(\mathcal L(\bar X_k),\mathcal L(X_k^{\mathrm{RM}}))$. 
Under the conditional coupling, the cross term between $T_{1,k}$ and $\bar X_k-X_k^{\mathrm{RM}}$ has zero expectation. The remaining cross term involves the difference between the two drift updates, which contains an additional factor of $\alpha$. We handle $T_{2,k}$ using its $L^2$ bound. 
Iteration gives a bound uniform in $k$. Since replacing $\bar X_k$ by $X_k^{\mathrm{RM-N}}$ costs at most $\alpha K\sqrt p$ in 2-Wasserstein distance, the triangle inequality yields the following theorem. The proof is given in Appendix~\ref{app:rm-case}.

\begin{restatable}[Drift error for randomized midpoint]{theorem}{randomizedmidpointbias}\label{thm:randomized-midpoint-bias}
Suppose Assumptions~\ref{ass:convex}, \ref{ass:smooth}, and~\ref{ass:subgaussian} hold, the biases satisfy~\eqref{eq:stagewise-bias-bound}, and the initial states $X_0^{\mathrm{RM-N}}$ and $X_0^{\mathrm{RM}}$ have the same distribution. If
\begin{equation*}
\alpha\le\min\left\{\frac1{8M\sqrt\kappa},\frac1{K^2}\right\},
\end{equation*}
then, for every $k\ge0$,
\begin{equation}\label{eq:rm-bias-bound}
W_2(\mathcal L(X_k^{\mathrm{RM-N}}),\mathcal L(X_k^{\mathrm{RM}}))\le\alpha\sqrt p\left(10\kappa K+\frac{K^2}{\sqrt m}\right)+\frac{4\delta\sqrt p}{m}.
\end{equation}
\end{restatable}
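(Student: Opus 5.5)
We prove the bound by a squared-distance recurrence for $D_k:=(\mathbb E\|\bar X_k-X_k^{\mathrm{RM}}\|^2)^{1/2}$, built along the coupling already set up. Then we pass from $\bar X_k$ to $X_k^{\mathrm{RM-N}}$ using the $\alpha K\sqrt p$ cost noted after~\eqref{eq:bar-X-definition}. The conditions $\alpha\le1/(8M\sqrt\kappa)$ and $\alpha\le1/K^2$ imply $\alpha\le1/(m+M)$, $\alpha M\le1$ and $\alpha K^2\le1$, so Lemmas~\ref{lem:randomized-midpoint-control} and~\ref{lem:exact-rm-contraction} both apply. Since $\bar X_0=X_0^{\mathrm{RM-N}}$ has the same law as $X_0^{\mathrm{RM}}$, we may couple them so that $D_0=0$. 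At each step we use the conditional coupling~\eqref{eq:T1-coupling}, which is chosen independently of $\tau_{k+1},Z'''_{k+1}$, and we run $X^{\mathrm{RM}}$ with the same $\tau_{k+1}$, $\widetilde Z_{k+1}$, $Z'''_{k+1}$ as $\widecheck X_{k+1}$.

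Write $\Delta_k=\bar X_k-X_k^{\mathrm{RM}}$ and $A_k=\widecheck X_{k+1}-X_{k+1}^{\mathrm{RM}}$. Then $\Delta_{k+1}=A_k+T_{1,k}+T_{2,k}$. Expanding the square gives
\[
\mathbb E\|\Delta_{k+1}\|^2=\mathbb E\|A_k\|^2+\mathbb E\|T_{1,k}+T_{2,k}\|^2+2\mathbb E\langle A_k,T_{1,k}\rangle+2\mathbb E\langle A_k,T_{2,k}\rangle .
\]
The key cross term is $\mathbb E\langle A_k,T_{1,k}\rangle$. Decompose $A_k=\Delta_k-\alpha\big(\nabla U(\widecheck Y_k)-\nabla U(Y_k^{\mathrm{RM}})\big)$. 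The part $\langle\Delta_k,T_{1,k}\rangle$ vanishes in expectation, because $\Delta_k$ is measurable with respect to $(\bar X_k,X_k^{\mathrm{RM}})$ and $T_{1,k}$ is conditionally centered. This requires the coupling to be built conditionally on both states, with $T_{1,k}$ conditionally independent of $X_k^{\mathrm{RM}}$ given $\bar X_k$; I would arrange exactly that. The remaining part is bounded by Cauchy--Schwarz as $\alpha M\,\|\widecheck Y_k-Y_k^{\mathrm{RM}}\|\,\|T_{1,k}\|$. The two midpoints share their noise, so $\|\widecheck Y_k-Y_k^{\mathrm{RM}}\|\le(1+\alpha M)\|\Delta_k\|$, and this term is at most $2\alpha M D_k\sqrt{2/3}\,\alpha^{3/2}K^2\sqrt p$. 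The term $\langle A_k,T_{2,k}\rangle$ is bounded by $\sqrt{1-\tfrac{47}{64}m\alpha}\,D_k\,\|T_{2,k}\|_{L^2}$ using Lemma~\ref{lem:exact-rm-contraction}.

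Combining these estimates with Lemma~\ref{lem:exact-rm-contraction} for $\mathbb E\|A_k\|^2$ yields
\[
D_{k+1}^2\le\Big(1-\tfrac{47}{64}m\alpha\Big)D_k^2+2D_k\,(a+b)+c^2 .
\]
Here $a=\sqrt{8/3}\,M\alpha^{5/2}K^2\sqrt p$ collects the $T_1$ drift-difference cross term. The quantity $b$ is the $L^2$ bound on $T_{2,k}$ from Lemma~\ref{lem:randomized-midpoint-control}, namely $b\approx\sqrt2 M\alpha^2K\sqrt p+\alpha\delta\sqrt p(1+M\alpha)+O(\alpha^{5/2})$. Finally $c^2=\tfrac23\alpha^3K^4p+b^2+2\sqrt{2/3}\,\alpha^{3/2}K^2\sqrt p\,b$.

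By induction, $D_k\le D^\ast$ for all $k$, where $D^\ast$ is the positive root of $\tfrac{47}{64}m\alpha D^2=2D(a+b)+c^2$. This root is at most $\tfrac{128}{47}\tfrac{a+b}{m\alpha}+\sqrt{\tfrac{64}{47}}\tfrac{c}{\sqrt{m\alpha}}$. The pieces then give the stated terms:
\begin{itemize}
\item $b/(m\alpha)$ gives roughly $\tfrac{128}{47}\big(\sqrt2\kappa\alpha K\sqrt p+\delta\sqrt p/m\cdot(1+M\alpha)\big)$.
\item The $\alpha^3K^4p$ part of $c^2/(m\alpha)$ gives $\alpha K^2\sqrt{p/m}$ times a constant near $1$.
\item $a/(m\alpha)=O(\kappa\alpha^{3/2}K^2)$ is absorbed using $\alpha K^2\le1$, so it is $O(\kappa\alpha K\sqrt p)$.
\item The $b^2$ and mixed parts of $c$ are lower order, since $\sqrt{\alpha/m}\le1/m$ up to constants for $\alpha\le1/M$.
\end{itemize}
Adding $\alpha K\sqrt p\le\kappa\alpha K\sqrt p$ for the final triangle inequality then gives~\eqref{eq:rm-bias-bound}.

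\textbf{Main obstacle.} The main difficulty is the constant bookkeeping needed to land exactly on $10\kappa K$, $K^2/\sqrt m$ and $4\delta/m$. The $K^2/\sqrt m$ coefficient is especially tight. The $c^2$ term must produce essentially $\sqrt{(2/3)\cdot(64/47)}\approx0.96$, so the cross term between $T_{1,k}$ and $T_{2,k}$ and the $b^2$ contribution have to be routed into the linear-in-$D$ term or into the $\kappa K$ and $\delta$ terms, rather than into $c$. If that is too loose, I would instead bound $\|T_{1,k}+T_{2,k}\|^2\le(1+\epsilon)\|T_{1,k}\|^2+(1+\epsilon^{-1})\|T_{2,k}\|^2$ with a tuned $\epsilon$. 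Another option is to exploit the slack $47/64$ versus the needed rate, trading some of the contraction for the cross terms via Young's inequality.
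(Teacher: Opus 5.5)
\textbf{The constants in the statement are not reached.} Your architecture matches the paper's: the splitting $\bar X_{k+1}-X_{k+1}^{\mathrm{RM}}=A_k+T_{1,k}+T_{2,k}$, conditional centering to remove $\mathbb E\langle\Delta_k,T_{1,k}\rangle$, the extra factor $M\alpha$ on the drift correction, Lemmas~\ref{lem:randomized-midpoint-control} and~\ref{lem:exact-rm-contraction}, and an invariant level from $D_0=0$. This gives a bound of the right form, but the theorem asserts explicit constants, and your bookkeeping as set up does not reach them. Write $t_1=\tfrac{47}{64}m\alpha$ and $t_2=\tfrac23\alpha^3K^4p$.

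First, your $a=\sqrt{8/3}\,M\alpha^{5/2}K^2\sqrt p$ is about twice what your own cross-term estimate gives, namely $(1+M\alpha)\sqrt{2/3}\,M\alpha^{5/2}K^2\sqrt p$. With your $a$, the term $2(a+b)/t_1$ alone contributes $\tfrac{128}{47}(3\sqrt{2/3}+\sqrt2)\,\kappa\alpha K\sqrt p\approx10.5\,\kappa\alpha K\sqrt p$ when $\alpha K^2=1$. The theorem leaves only $10\kappa-1$ for this term, plus the slack $(1-\sqrt{128/141})\,\alpha K^2\sqrt p/\sqrt m$. At $\alpha=1/(8M\sqrt\kappa)$ with $\alpha K^2=1$, that slack is about $0.13\,\kappa^{3/4}\alpha K\sqrt p$, so the argument fails there for every $\kappa\ge1$. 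The paper uses the interpolation inequality to get $\|\widecheck Y_k-Y_k^{\mathrm{RM}}\|\le\|\Delta_k\|$, hence $a=\sqrt{2/3}\,M\alpha^{5/2}K^2\sqrt p$.

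Second, and more structurally, you expand $\|A_k+T_{1,k}+T_{2,k}\|^2$ with $T_{2,k}$ inside the square. This puts $c^2=(\sqrt{t_2}+b)^2$ into the constant term, so your bound on the root picks up an extra $b/\sqrt{t_1}$. Even with the corrected $a$, take $\kappa=1$, $M\alpha=1/8$, $\alpha K^2=1$. Then $2(a+b)/t_1+c/\sqrt{t_1}\approx11.91\,\alpha K\sqrt p$, whereas the theorem leaves $(9+\sqrt8)\,\alpha K\sqrt p\approx11.83\,\alpha K\sqrt p$ for $D_k$. Your proposed remedies do not help. Minimizing $(1+\epsilon)t_2+(1+\epsilon^{-1})b^2$ over $\epsilon$ returns exactly $(\sqrt{t_2}+b)^2$. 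At that parameter point the factor $47/64$ in Lemma~\ref{lem:exact-rm-contraction} is attained exactly, so there is no contraction to trade. (The exact positive root of your quadratic might keep enough room, but checking this needs a case analysis over $\kappa$, $M\alpha$ and $\alpha K^2$ that you do not carry out.)

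\textbf{The missing step.} Separate $T_{2,k}$ by Minkowski before squaring:
$D_{k+1}\le\|A_k+T_{1,k}\|_{L^2}+\|T_{2,k}\|_{L^2}\le\sqrt{(1-t_1)D_k^2+2t_4D_k+t_2}+t_3$, with $t_4=M\alpha\sqrt{t_2}$ and $t_3=b$. Then note that $r=2(t_3+t_4)/t_1+\sqrt{t_2/t_1}$ satisfies $(1-t_1)r^2+2t_4r+t_2\le(r-t_3)^2$, so $r$ is an invariant level (Lemma~\ref{lem:scalar-square-root-recursion}). Now $t_3$ enters only through $2t_3/t_1$, with no $t_3/\sqrt{t_1}$ term. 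The resulting constants satisfy $\tfrac{128}{47}(\sqrt2+2\sqrt{2/3})<9$, $\sqrt{128/141}<1$ and $\tfrac{128}{47}(1+M\alpha)<4$. Together with $\alpha K\sqrt p\le\kappa\alpha K\sqrt p$, these give~\eqref{eq:rm-bias-bound}.
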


Adding the exact-chain error from Lemma~\ref{lem:rm-rate} to Theorem~\ref{thm:randomized-midpoint-bias} gives the following bound relative to the target distribution $\pi$.

\begin{theorem}[Overall error of noisy randomized midpoint]\label{thm:noisy-rm-overall}
Under the assumptions of Theorem~\ref{thm:randomized-midpoint-bias}, suppose further that $U$ is twice differentiable. Then, for every $k\ge0$,
\begin{align}
W_2(\mathcal L(X_k^{\mathrm{RM-N}}),\pi)&\le1.11e^{-mk\alpha/2}W_2(\mathcal L(X_0^{\mathrm{RM-N}}),\pi)+(2.4\sqrt{\kappa M\alpha}+1.77)M\alpha\sqrt{\frac pm}\notag\\
&\quad+\alpha\sqrt p\left(10\kappa K+\frac{K^2}{\sqrt m}\right)+\frac{4\delta\sqrt p}{m}.\label{eq:noisy-rm-overall}
\end{align}
\end{theorem}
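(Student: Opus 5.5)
The plan is to apply the triangle inequality for $W_2$ through the law of the exact randomized-midpoint chain $X_k^{\mathrm{RM}}$:
\begin{equation*}
W_2(\mathcal L(X_k^{\mathrm{RM-N}}),\pi)\le W_2(\mathcal L(X_k^{\mathrm{RM-N}}),\mathcal L(X_k^{\mathrm{RM}}))+W_2(\mathcal L(X_k^{\mathrm{RM}}),\pi).
\end{equation*}
Both chains start from the same initial distribution, so $W_2(\mathcal L(X_0^{\mathrm{RM}}),\pi)=W_2(\mathcal L(X_0^{\mathrm{RM-N}}),\pi)$.

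The first term is bounded directly by Theorem~\ref{thm:randomized-midpoint-bias}. Its hypotheses (Assumptions~\ref{ass:convex}, \ref{ass:smooth}, \ref{ass:subgaussian}, the stagewise bias bounds~\eqref{eq:stagewise-bias-bound}, equal initial laws, and $\alpha\le\min\{1/(8M\sqrt\kappa),1/K^2\}$) are exactly the assumptions imposed here. It contributes the last two terms of~\eqref{eq:noisy-rm-overall}.

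The second term is bounded by Lemma~\ref{lem:rm-rate}, which additionally needs $U$ twice differentiable (assumed) and the step-size condition $M\alpha+\sqrt\kappa(M\alpha)^{3/2}\le1/4$. It contributes the first two terms of~\eqref{eq:noisy-rm-overall}, with $W_2(\mathcal L(X_0^{\mathrm{RM}}),\pi)$ rewritten using the equality of initial laws.

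The only step needing care is checking that $\alpha\le1/(8M\sqrt\kappa)$ implies the step-size condition of Lemma~\ref{lem:rm-rate}. Since $\kappa\ge1$, we have $M\alpha\le 1/(8\sqrt\kappa)\le 1/8$. Also $(M\alpha)^{3/2}\le(8\sqrt\kappa)^{-3/2}$, so
\begin{equation*}
\sqrt\kappa(M\alpha)^{3/2}\le \kappa^{1/2}\,8^{-3/2}\kappa^{-3/4}=8^{-3/2}\kappa^{-1/4}\le 8^{-3/2}<0.045.
\end{equation*}
Hence $M\alpha+\sqrt\kappa(M\alpha)^{3/2}<0.125+0.045<1/4$, so Lemma~\ref{lem:rm-rate} applies. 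Summing the two bounds gives~\eqref{eq:noisy-rm-overall} for every $k\ge0$.
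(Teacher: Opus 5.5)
Your proposal is correct and follows the paper's proof: the triangle inequality through $\mathcal L(X_k^{\mathrm{RM}})$, with Theorem~\ref{thm:randomized-midpoint-bias} bounding the noisy--exact term and Lemma~\ref{lem:rm-rate} bounding the exact--target term. Your check that $\alpha\le1/(8M\sqrt\kappa)$ gives $M\alpha+\sqrt\kappa(M\alpha)^{3/2}\le\frac18+\frac{1}{16\sqrt2}<\frac14$ is the same verification the paper uses.
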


\begin{proof}[Proof of Theorem~\ref{thm:noisy-rm-overall}]
The step-size condition implies
\begin{equation*}
M\alpha+\sqrt\kappa(M\alpha)^{3/2}
\le\frac{1}{8\sqrt\kappa}+\frac{1}{16\sqrt2\kappa^{1/4}}
\le\frac18+\frac{1}{16\sqrt2}
<\frac14,
\end{equation*}
so Lemma~\ref{lem:rm-rate} applies. The triangle inequality gives
\begin{equation*}
W_2(\mathcal{L}(X_k^{\mathrm{RM-N}}),\pi)
\le W_2(\mathcal{L}(X_k^{\mathrm{RM-N}}),\mathcal{L}(X_k^{\mathrm{RM}}))+W_2(\mathcal{L}(X_k^{\mathrm{RM}}),\pi).
\end{equation*}
Since the two chains have the same initial distribution, combining Theorem~\ref{thm:randomized-midpoint-bias} with Lemma~\ref{lem:rm-rate} proves \eqref{eq:noisy-rm-overall}.
\end{proof}

\begin{remark}
For fixed $m,M,p,K$, the bound in~\eqref{eq:noisy-rm-overall} is of order $\alpha+\delta$ once the initialization term has decayed to this scale. Thus randomized midpoint retains order-$\alpha$ accuracy with unbiased drift evaluations, and also with biased evaluations when $\delta=O(\alpha)$.
\end{remark}

\section{Numerical experiments}\label{sec:exp}

We compare Euler and randomized midpoint in three examples. The strongly convex example examines the effects of the noise distribution and evaluation bias. The bimodal example explores behavior beyond strong convexity, while Bayesian logistic regression considers errors arising from minibatch gradients. Computational details are given in Appendix~\ref{app:exp}.

\paragraph{A strongly convex target.}
Consider $\pi(x)\propto\exp[-U(x)]$ with
\begin{equation}\label{eq:exp-potential}
U(x)=\frac{x^2}{2}+s^2\log\cosh\left(\frac{x}{s}\right),\quad s=0.15.
\end{equation}
Here, $1\le U''(x)\le2$. Both methods start from $\pi$ and run to $T=8$. We consider independent gradient noise from $\mathcal{N}(0,1)$, the Gaussian mixture $0.9\mathcal{N}(0,1)+0.1\mathcal{N}(0,11)$ (MoG), and the standard Student $t_{2.05}$ distribution. We first use unbiased gradients, then consider varied biases for RM with Gaussian noise. 
The Gaussian and MoG noises satisfy the sub-Gaussian condition in Assumption~\ref{ass:subgaussian}, whereas the Student noise has finite second moment but is not sub-Gaussian.

For Gaussian and MoG noise, the $W_2$ distances between noisy and exact-gradient chains decay close to first order in $\alpha$ (Fig.~\ref{fig:exp1}-a). The decay is approximately half order for Student noise over the steps considered. Euler and RM give nearly identical curves in this comparison. RM has smaller distances to $\pi$ under Gaussian and MoG noise (Fig.~\ref{fig:exp1}-b), whereas the two methods perform similarly under Student noise.

With biases of $0$ and $\alpha/4$, the distances decay close to first order (Fig.~\ref{fig:exp1}-c). A bias of $\sqrt\alpha/10$ gives approximately half-order decay, while a fixed bias of $0.02$ produces an error floor.

\begin{figure}[t]
\centering
\includegraphics[width=\linewidth]{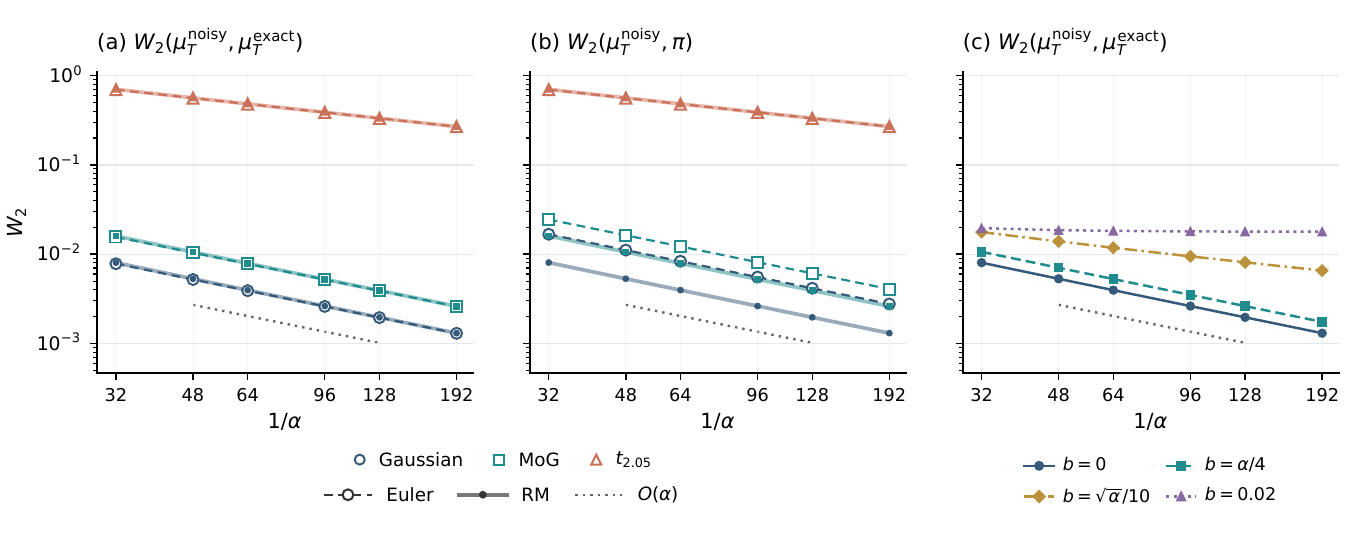}
\caption{Strongly convex example. $\mu_T^{\mathrm{noisy}}$ and $\mu_T^{\mathrm{exact}}$ denote terminal distributions with noisy and exact gradients. (a, b): Unbiased gradients under three noise distributions. (c): RM with Gaussian noise and varying gradient bias. Dotted lines indicate first-order decay.}
\label{fig:exp1}
\end{figure}

\paragraph{A bimodal target.}
We next consider
\begin{equation*}
\pi(x)\propto e^{-6(x+1)^2}+4e^{-18(x+1)^2}+e^{-8(x-1)^2},
\end{equation*}
a mixture of three Gaussian components with two modes. Its potential is nonconvex. We use the same three noise distributions, keep the gradients unbiased, and run both methods from $\pi$ to $T=32$.

For Gaussian and MoG noise, the distances between noisy and exact-gradient chains are similar for Euler and RM and decay close to first order in $\alpha$ (Fig.~\ref{fig:exp2}-a). The noisy RM distances to $\pi$ also decrease at roughly first order and remain below those of Euler (Fig.~\ref{fig:exp2}-b). Under Student noise, the distances between noisy and exact-gradient chains account for most of the error relative to $\pi$. With exact gradients, RM exhibits nearly second-order decay and has substantially smaller distances to $\pi$ than Euler (Fig.~\ref{fig:exp2}-c).

\begin{figure}[t]
\centering
\includegraphics[width=\linewidth]{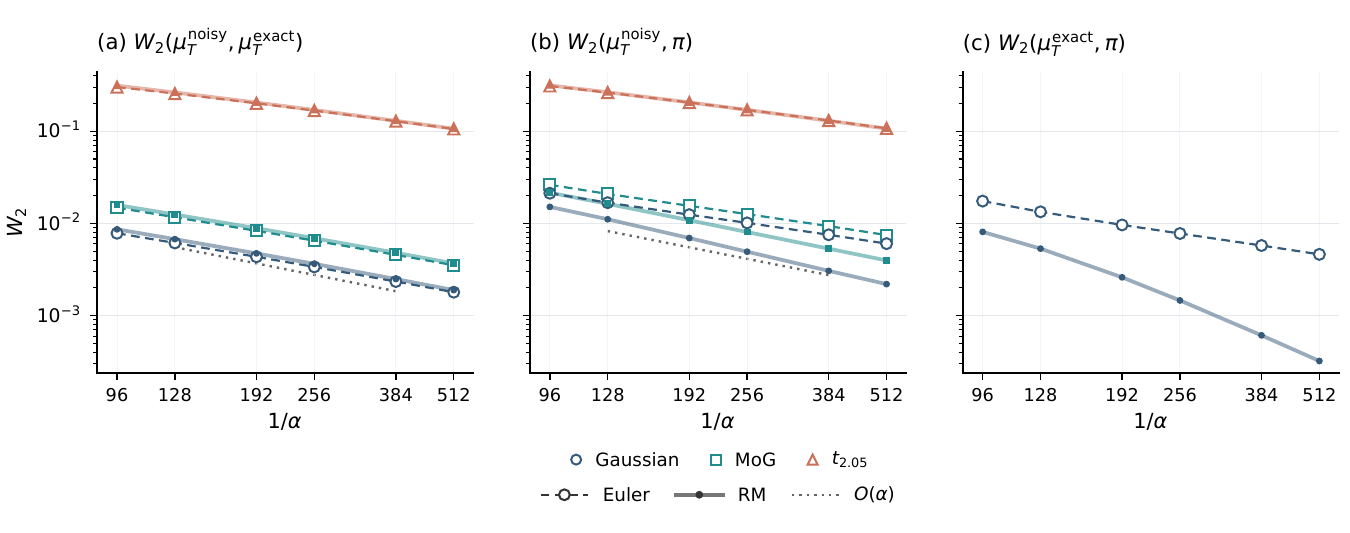}
\caption{Bimodal example. Distances between (a) noisy and exact-gradient chains, (b) noisy chains and $\pi$, and (c) exact-gradient chains and $\pi$. Dotted lines indicate first-order decay.}
\label{fig:exp2}
\end{figure}

\paragraph{Bayesian logistic regression.}
We next consider gradient errors arising from minibatch sampling. We fit a Bayesian logistic regression model to the $n=11{,}552$ MNIST training images labeled 3 or 5, a problem also considered by \citet{leimkuhler2024contraction}. The predictors are 50 whitened principal components and an intercept, with a standard Gaussian prior on the 51 coefficients. Writing $\ell_i(\theta)$ for the negative log-likelihood of observation $i$, we use the gradient estimate
\begin{equation}\label{eq:exp-minibatch}
g_B(\theta)=\theta+\frac nB\sum_{j=1}^B\nabla\ell_{I_j}(\theta), \quad I_j\overset{\mathrm{iid}}\sim\operatorname{Unif}\{1,\ldots,n\}.
\end{equation}
This estimate is unbiased, and its errors are uniformly bounded for the fixed data set, satisfying the conditional sub-Gaussian assumption. We use $B=1024$ or $4096$, with a fresh minibatch at every gradient evaluation. Both methods start at the posterior mode and run to $T=8$.

\begin{figure}[htbp]
\centering
\includegraphics[width=.95\linewidth]{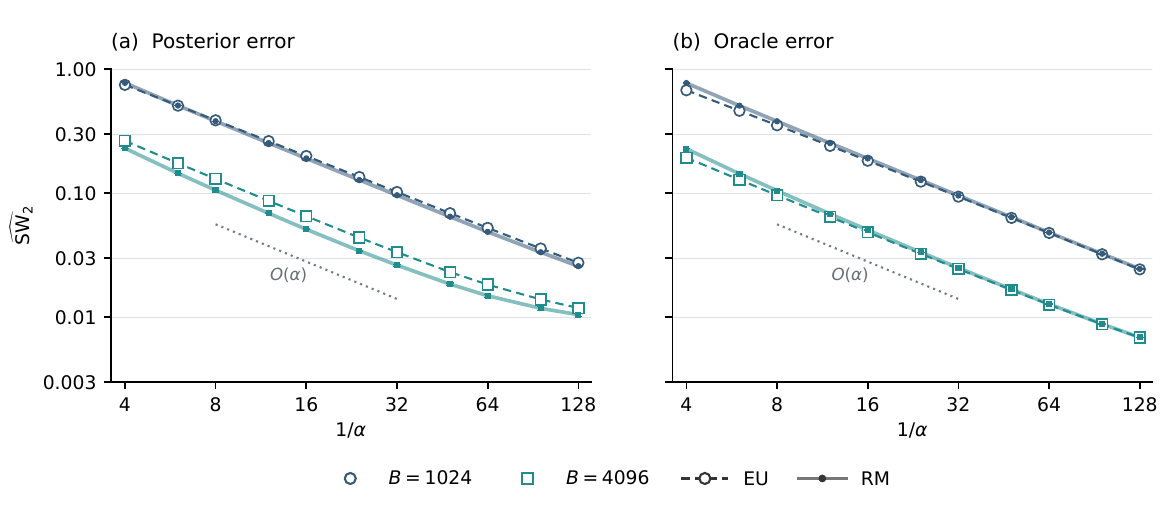}
\caption{Bayesian logistic regression. Sliced-$W_2$ distances to (a) the posterior reference and (b) the corresponding full-gradient chain. Points and bars show means and standard deviations over 20 repetitions. Dotted lines indicate first-order decay.}
\label{fig:exp3}
\end{figure}

We measure discrepancies by sliced-$W_2$ \citep{bonneel2015sliced}, the square root of the average squared $W_2$ distance over one-dimensional projections in Fig.~\ref{fig:exp3}. 
At $B=4096$, RM gives smaller distances to the posterior throughout the step-size range, although its distances to the full gradient chain are slightly larger than Euler's.
For both methods, distances to the corresponding full gradient chain decrease close to first order in $\alpha$, and increasing the batch size reduces these distances.

\section{Discussion}\label{sec:conclusion}
Our results quantify how noisy gradient evaluations affect Euler and randomized midpoint (RM) Langevin sampling. For strongly convex $U\in C^1$ with a globally Lipschitz gradient, our assumptions on the bias, together with the uniform sub-Gaussian noise yield an $O(\alpha+\delta)$ comparison in 2-Wasserstein distance with the corresponding exact-gradient chain, uniformly over iterations from a common initial distribution. This separates the effect of gradient errors from the discretization error, and both bounds carry an explicit $\sqrt p$ dimension factor.

Our main tool is a coupling of laws rather than paths: conditional centering and sub-Gaussianity make the noisy increment $\sqrt{2\alpha}Z-\alpha\xi$ close to a Gaussian at order $\alpha^{3/2}$ (Lemma~\ref{lem:gaussian-smoothing}), where pathwise coupling gives only order $\alpha$ and hence a square-root rate overall. In RM, the second evaluation is made at the random midpoint, so its error can depend on the Gaussian increment of the same step.
We handle this dependence by coupling the centered error with the Gaussian increment of the following step while preserving the joint distribution of the midpoint and terminal increments.
To the best of our knowledge, no comparable non-asymptotic 2-Wasserstein distance bounds have previously been established for randomized midpoint with noisy gradients.

For Euler, our comparison bounds complement recent results on the Wasserstein error of exact Euler discretization \citep{pedrotti2026wasserstein} and the sampling error of SGLD \citep{li2022sharp,paulin2026theoretical}, obtained under twice continuously differentiable potentials.
Under second moments alone, the noise contribution is known to be of order $\sqrt\alpha$ \citep{dalalyan2019user}. 
The SGLD analysis of \citet{paulin2026theoretical} combines fourth-moment control with mean-square Lipschitz regularity of unbiased errors to obtain an order $\alpha$ rate. 
Our sub-Gaussian condition imposes stronger tail control, but requires no continuity or Lipschitz regularity of the evaluation errors at the query point, and our bounds also allow biased evaluations.

Our counterexample uses a fixed noise distribution with finite variance and shows that second moments alone do not ensure a first-order comparison. A natural question is which weaker tail conditions retain both the first-order comparison rate and the square-root dependence on dimension. For RM, another question is how the noise level and the allocation of gradient samples between its two evaluations should depend on the step size to preserve gains from a more accurate discretization. The bimodal example from our empirical results shows that RM can have substantially smaller total error than Euler, although this setting lies outside the strong convexity assumptions of our analysis. Establishing conditions under which this accuracy advantage can be guaranteed for nonconvex targets remains an open question.

\section*{Acknowledgments}
The authors thank Prof.\ Arnak Dalalyan for bringing the recent work of Paulin, Pedrotti and Whalley on Langevin sampling to our attention.
VR acknowledges support from the National Science Foundation through grant DMS-2503118.

{
\small
\bibliographystyle{plainnat}
\bibliography{refs}
}

\appendix
\section{Proofs}\label{sec:proofs}

\subsection{Proofs for Section~\ref{sec:prelim}}\label{app:proof-prelim}

\eulerfiniterate*
\begin{proof}[Proof of Corollary~\ref{cor:euler-rate}]
Since $0\preceq I_p-\alpha\nabla^2U(x)\preceq(1-m\alpha)I_p$ for $\alpha\le1/M$,
the Euler drift map $x\mapsto x-\alpha\nabla U(x)$ is $(1-m\alpha)$-Lipschitz.
Coupling two chains with Euler update and the same Gaussian increments, with one chain initialized at its invariant distribution $\widehat\pi_\alpha$ and the other at $X_0^{\mathrm{EU}}$, therefore gives
\begin{equation*}
W_2(\mathcal L(X_k^{\mathrm{EU}}),\widehat\pi_\alpha) \le(1-m\alpha)^kW_2(\mathcal L(X_0^{\mathrm{EU}}),\widehat\pi_\alpha).
\end{equation*}
The triangle inequality and Lemma~\ref{lem:euler-stationary-bias} now yield
\begin{align*}
W_2(\mathcal L(X_k^{\mathrm{EU}}),\pi)
&\le(1-m\alpha)^kW_2(\mathcal L(X_0^{\mathrm{EU}}),\pi) +\left(1+(1-m\alpha)^k\right)W_2(\widehat\pi_\alpha,\pi)\\
&\le(1-m\alpha)^kW_2(\mathcal L(X_0^{\mathrm{EU}}),\pi) +12\alpha M\sqrt{\frac pm},
\end{align*}
which proves~\eqref{eq:euler-rate}.
\end{proof}

\subsection{Proofs for Section~\ref{sec:euler-case}}\label{app:euler-case}

\gaussiansmoothing*
\begin{proof}[Proof of Lemma~\ref{lem:gaussian-smoothing}]
Fix $x\in\mathbb{R}^p$ and write $\xi=\xi(x)$. Let $G\sim\mathcal{N}(0,I_p)$ be independent of $\xi$, set $t=\sqrt{\alpha/2}$, and write
\begin{equation*}
\mu_t=\mathcal{L}(G-t\xi), \quad \gamma=\mathcal{N}(0,I_p), \quad r_t=\frac{\mathrm d\mu_t}{\mathrm d\gamma}.
\end{equation*}
Talagrand's transportation inequality for the standard Gaussian measure~\citep{talagrand1996transportation} bounds the Wasserstein distance by relative entropy:
\begin{equation*}
W_2^2(\mu_t,\gamma)\le2D_{\mathrm{KL}}(\mu_t\|\gamma)=2\mathbb{E}_{Y\sim\mu_t}[\log r_t(Y)].
\end{equation*}
Using Jensen's inequality, and then $\mathrm d\mu_t=r_t\mathrm d\gamma$, we have
\begin{equation*}
\mathbb{E}_{Y\sim\mu_t}\log r_t(Y)\le\log\mathbb{E}_{Y\sim\mu_t}r_t(Y)=\log\int r_t(y)^2\gamma(\mathrm dy).
\end{equation*}
To bound this integral, note that for a fixed $a\in\mathbb{R}^p$, the density of $G+a$ relative to $G$ at $y$ is
\begin{equation*}
\exp\left(a^\top y-\frac12\|a\|^2\right).
\end{equation*}
Taking $a=-t\xi$ and averaging over $\xi$ gives
\begin{equation*}
r_t(y)=\mathbb{E}_{\xi} \left[\exp\left(-t\xi^\top y-\frac{t^2}{2}\|\xi\|^2\right)\right].
\end{equation*}
Let $\zeta$ be an independent copy of $\xi$. By Tonelli's theorem and the Gaussian moment-generating function,
\begin{align*}
\int r_t(y)^2\gamma(\mathrm dy)
&=\mathbb{E}_{\xi}\mathbb{E}_{\zeta}\left[\exp\left(-\frac{t^2}{2}\|\xi\|^2-\frac{t^2}{2}\|\zeta\|^2\right)
\mathbb{E}_{Y\sim\gamma}\exp\left(-t(\xi+\zeta)^\top Y\right)\right]\\
&=\mathbb{E}_{\xi}\mathbb{E}_{\zeta}\exp\left(-\frac{t^2}{2}\|\xi\|^2-\frac{t^2}{2}\|\zeta\|^2+\frac{t^2}{2}\|\xi+\zeta\|^2\right)\\
&=\mathbb{E}_{\xi}\mathbb{E}_{\zeta}\exp(t^2\zeta^\top\xi).
\end{align*}
We next apply Assumption~\ref{ass:subgaussian} twice. Conditional on $\zeta$, use the sub-Gaussian bound for $\xi$ with $u=t^2\zeta$ to obtain
\begin{equation*}
\mathbb{E}_{\xi} \left[\exp(t^2\zeta^\top\xi)\right]\le\exp\left(\frac{K^2t^4}{2}\|\zeta\|^2\right).
\end{equation*}
Let $\psi\sim\mathcal{N}(0,I_p)$ be independent of $\zeta$. For fixed $\zeta$, the Gaussian moment-generating function gives
\begin{equation*}
\exp\left(\frac{K^2t^4}{2}\|\zeta\|^2\right)=\mathbb{E}_{\psi}\exp(Kt^2\psi^\top\zeta).
\end{equation*}
Conditional on $\psi$, apply the sub-Gaussian bound for $\zeta$ with $u=Kt^2\psi$. It follows that
\begin{align*}
\mathbb{E}_{\zeta}\exp\left(\frac{K^2t^4}{2}\|\zeta\|^2\right)
&=\mathbb{E}_{\psi}\mathbb{E}_{\zeta}\exp(Kt^2\psi^\top\zeta)\\
&\le\mathbb{E}_{\psi}\exp\left(\frac{K^4t^4}{2}\|\psi\|^2\right)\\
&=\prod_{j=1}^p\mathbb{E}\exp\left(\frac{K^4t^4}{2}\psi_j^2\right)
=(1-K^4t^4)^{-p/2}.
\end{align*}
The last equality uses the moment generating function of a chi-squared random variable. It is finite because $K^4t^4\le1/4$ under the condition $\alpha K^2\le1$.
Combining the preceding estimates yields
\begin{equation*}
W_2^2(\mu_t,\gamma)\le-p\log(1-K^4t^4).
\end{equation*}
Since $-\log(1-z)\le z/(1-z)$ for $z\in[0,1)$,
\begin{equation*}
W_2^2(\mu_t,\gamma)\le\frac{pK^4t^4}{1-K^4t^4}.
\end{equation*}
Using $t^2=\alpha/2$ and $K^4t^4\le1/4$ gives
\begin{equation*}
W_2^2(\mu_t,\gamma)\le\frac13\alpha^2K^4p.
\end{equation*}
Finally, $\sqrt{2\alpha}(G-t\xi)=\sqrt{2\alpha}G-\alpha\xi$. By the homogeneity of $W_2$,
\begin{equation*}
W_2^2\left(\mathcal{L}(\sqrt{2\alpha}G-\alpha\xi),\mathcal{N}(0,2\alpha I_p)\right) =2\alpha W_2^2(\mu_t,\gamma) \le\frac23\alpha^3K^4p,
\end{equation*}
which proves \eqref{eq:gaussian-smoothing}.
\end{proof}

\eulerfirstorder*
\begin{proof}[Proof of Theorem~\ref{thm:euler-first-order}]
Let
\begin{equation*}
D_k=W_2(\mathcal{L}(X_k^{\mathrm{EU-N}}),\mathcal{L}(X_k^{\mathrm{EU}})).
\end{equation*}
For each $k$, choose an optimal coupling of the current states such that
\begin{equation*}
\mathbb{E}\|X_k^{\mathrm{EU-N}}-X_k^{\mathrm{EU}}\|^2=D_k^2.
\end{equation*}
Conditional on $(X_k^{\mathrm{EU-N}},X_k^{\mathrm{EU}})=(x,y)$, 
the random part of the noisy Euler update has distribution $\sqrt{2\alpha}Z_{k+1}-\alpha\xi(x)$, with $Z_{k+1}$ independent of the error. The corresponding random part of the exact Euler update is $\mathcal{N}(0,2\alpha I_p)$. By Lemma~\ref{lem:gaussian-smoothing}, we can couple these increments. Denote the coupled increments by $A_{k+1}$ and $\widetilde A_{k+1}$ so that
\begin{equation*}
A_{k+1}\sim\mathcal{L}(\sqrt{2\alpha}Z_{k+1}-\alpha\xi(x)), \quad \widetilde A_{k+1}\sim\mathcal{N}(0,2\alpha I_p),
\end{equation*}
and
\begin{equation*}
\mathbb{E}(\|A_{k+1}-\widetilde A_{k+1}\|^2\mid x,y)\le\frac23\alpha^3K^4p.
\end{equation*}
By Assumption~\ref{ass:subgaussian}, $\mathbb{E}\xi(x)=0$. Hence both increments are centered and 
\begin{equation*}
\mathbb{E}(A_{k+1}-\widetilde A_{k+1}\mid x,y)=0.
\end{equation*}

Under Assumptions~\ref{ass:convex} and~\ref{ass:smooth}, the strong convexity-smoothness interpolation inequality \citep[Theorem~2.1.12]{nesterov2013introductory} gives
\begin{equation}
\left\langle x-y,\nabla U(x)-\nabla U(y)\right\rangle
\ge\frac{mM}{m+M}\|x-y\|^2+\frac{1}{m+M}\|\nabla U(x)-\nabla U(y)\|^2.
\label{eq:interpolation}
\end{equation}
Therefore,
\begin{align*}
&\quad \ \|x-y-\alpha(\nabla U(x)-\nabla U(y))\|^2\\
&=\|x-y\|^2-2\alpha\left\langle x-y,\nabla U(x)-\nabla U(y)\right\rangle+\alpha^2\|\nabla U(x)-\nabla U(y)\|^2\\
&\le\left(1-\frac{2mM}{m+M}\alpha\right)\|x-y\|^2+\left(\alpha^2-\frac{2\alpha}{m+M}\right)\|\nabla U(x)-\nabla U(y)\|^2.
\end{align*}
Strong convexity and the Cauchy-Schwarz inequality imply
$\|\nabla U(x)-\nabla U(y)\|\ge m\|x-y\|$.
Since $\alpha\le1/(m+M)$, the coefficient of the last term above is nonpositive. Substituting this lower bound therefore gives
\begin{equation*}
\|x-y-\alpha(\nabla U(x)-\nabla U(y))\|^2
\le(1-m\alpha)^2\|x-y\|^2.
\end{equation*}

Use the coupled increments to define the next states by
\begin{align*}
X_{k+1}^{\mathrm{EU-N}}&=x-\alpha\nabla U(x)-\alpha b(x)+A_{k+1},\\
X_{k+1}^{\mathrm{EU}}&=y-\alpha\nabla U(y)+\widetilde A_{k+1}.
\end{align*}
The vector $x-y-\alpha(\nabla U(x)-\nabla U(y))-\alpha b(x)$ is fixed conditional on $(x,y)$. Its cross term with $A_{k+1}-\widetilde A_{k+1}$ therefore has conditional expectation zero. Expanding the squared norm gives
\begin{equation*}
\mathbb E(\|X_{k+1}^{\mathrm{EU-N}}-X_{k+1}^{\mathrm{EU}}\|^2\mid x,y)\le\|x-y-\alpha(\nabla U(x)-\nabla U(y))-\alpha b(x)\|^2+\frac23\alpha^3K^4p.
\end{equation*}
By the contraction bound, Minkowski's inequality, and $\mathbb E\|b_{k+1}\|^2\le\delta^2p$, we obtain
\begin{equation*}
D_{k+1}^2\le\big((1-m\alpha)D_k+\alpha\delta\sqrt p\big)^2+\frac23\alpha^3K^4p.
\end{equation*}
Set
\begin{equation*}
q=1-m\alpha,\quad B=\frac{\delta\sqrt p}{m},\quad S=\left(\frac{2\alpha^3K^4p}{3(1-q^2)}\right)^{1/2}.
\end{equation*}
Then $0\le q<1$, $\alpha\delta\sqrt p=(1-q)B$, and $D_0=0$. If $D_k\le B+S$, the recurrence implies
\begin{equation*}
D_{k+1}^2\le(B+qS)^2+(1-q^2)S^2 =B^2+2qBS+S^2\le(B+S)^2.
\end{equation*}
Induction yields $D_k\le B+S$ for every $k\ge0$. Finally,
\begin{equation*}
S^2=\frac{2\alpha^2K^4p}{3m(2-m\alpha)} \le\frac{\alpha^2K^4p(m+M)}{3mM},
\end{equation*}
where the last inequality follows from $\alpha\le1/(m+M)$. This proves~\eqref{eq:euler-first-order}.
\end{proof}

\eulercounterexample*
\begin{proof}[Proof of Proposition~\ref{prop:euler-counterexample}]
The stated density is symmetric and, for $t\ge1$, gives
\begin{equation*}
\mathbb E\xi=0,\quad \mathbb E\xi^2=\frac52\int_1^\infty z^{-3/2} \mathrm dz=5,\quad \Pr(|\xi|>t)=t^{-5/2}.
\end{equation*}
In particular, the errors have finite second moments but are not sub-Gaussian. Fix $n\ge64$ and $\alpha=1/(2n)$. Couple the chains using the same initial state $X_0^{\mathrm{EU-N}}=X_0^{\mathrm{EU}}\sim\pi$ and the same independent standard Gaussian $Z_k$. The updates are
\begin{align*}
X_{k+1}^{\mathrm{EU}}&=(1-\alpha)X_k^{\mathrm{EU}}+\sqrt{2\alpha}Z_{k+1},\\
X_{k+1}^{\mathrm{EU-N}}&=(1-\alpha)X_k^{\mathrm{EU-N}}+\sqrt{2\alpha}Z_{k+1}-\alpha\xi_{k+1}.
\end{align*}
Iterating the first recursion gives
\begin{equation*}
X_n^{\mathrm{EU}}=(1-\alpha)^nX_0^{\mathrm{EU}}+\sqrt{2\alpha}\sum_{j=1}^n(1-\alpha)^{n-j}Z_j.
\end{equation*}
The terms on the right are independent centered Gaussians, with total variance
\begin{align*}
(1-\alpha)^{2n}+2\alpha\sum_{j=0}^{n-1}(1-\alpha)^{2j}&=(1-\alpha)^{2n}+\frac{2\alpha(1-(1-\alpha)^{2n})}{1-(1-\alpha)^2}\\
&=1+\frac{\alpha(1-(1-\alpha)^{2n})}{2-\alpha}.
\end{align*}
Consequently, the Gaussian formula for $W_2$ gives
\begin{align*}
\mathcal L(X_n^{\mathrm{EU}})&=\mathcal N\left(0,1+\frac{\alpha(1-(1-\alpha)^{2n})}{2-\alpha}\right),\\
W_2(\mathcal L(X_n^{\mathrm{EU}}),\pi)&=\sqrt{1+\frac{\alpha(1-(1-\alpha)^{2n})}{2-\alpha}}-1\\
&\le\sqrt{1+\alpha}-1=\frac{\alpha}{\sqrt{1+\alpha}+1}\le\frac\alpha2,
\end{align*}
where $0<\alpha\le1$ implies $0\le1-(1-\alpha)^{2n}\le1$ and $2-\alpha\ge1$.

Subtracting the two updates gives
\begin{equation*}
X_{k+1}^{\mathrm{EU-N}}-X_{k+1}^{\mathrm{EU}}=(1-\alpha)(X_k^{\mathrm{EU-N}}-X_k^{\mathrm{EU}})-\alpha\xi_{k+1}.
\end{equation*}
Since the initial states are the same, iterating this recursion yields
\begin{equation}\label{eq:counterexample-euler-expansion}
X_n^{\mathrm{EU-N}}=X_n^{\mathrm{EU}}-\alpha\sum_{j=1}^n(1-\alpha)^{n-j}\xi_j.
\end{equation}
Here $X_n^{\mathrm{EU}}$ is independent of all the $\xi_j$. For $1\le j\le n$, we have $(1-\alpha)^{n-j}\ge1-(n-j)\alpha\ge1/2$. Thus $|\xi_j|>4\alpha^{-3/2}$ implies $\alpha(1-\alpha)^{n-j}|\xi_j|>2\alpha^{-1/2}$. We next show that, conditional on $X_n^{\mathrm{EU}}$ and $|\xi_1|,\ldots,|\xi_n|$, the probability of $|X_n^{\mathrm{EU-N}}|>2\alpha^{-1/2}$ is at least $1/2$ whenever at least one error exceeds this threshold.

For $u,v\in\mathbb R$ and $t\ge0$, the inequality $2|v|\le|u+v|+|u-v|$ implies that at least one of $|u+v|$ and $|u-v|$ exceeds $t$ whenever $|v|>t$. Hence
\begin{equation*}
\mathbf1_{\{|u+v|>t\}}+\mathbf1_{\{|u-v|>t\}}\ge\mathbf1_{\{|v|>t\}}.
\end{equation*}
Fix $j\in\{1,\ldots,n\}$ and write
\begin{equation*}
u=X_n^{\mathrm{EU}}-\alpha\sum_{i\ne j}^n(1-\alpha)^{n-i}\xi_i,\quad v=\alpha(1-\alpha)^{n-j}\xi_j.
\end{equation*}
Then $X_n^{\mathrm{EU-N}}=u-v$, and flipping the sign of $\xi_j$ changes this value to $u+v$. Conditional on $X_n^{\mathrm{EU}}$ and $|\xi_1|,\ldots,|\xi_n|$, the noise signs remain independent and equally likely to be positive or negative. Thus $(u,v)$ and $(u,-v)$ have the same conditional distribution. Applying the preceding indicator inequality with $t=2\alpha^{-1/2}$ gives
\begin{align*}
&\quad \Pr\left(|X_n^{\mathrm{EU-N}}|>2\alpha^{-1/2}\middle|X_n^{\mathrm{EU}},|\xi_1|,\ldots,|\xi_n|\right)\\
&=\frac12\mathbb E\left[\mathbf1_{\{|u-v|>2\alpha^{-1/2}\}}+\mathbf1_{\{|u+v|>2\alpha^{-1/2}\}}\middle|X_n^{\mathrm{EU}},|\xi_1|,\ldots,|\xi_n|\right]\\
&\ge\frac12\mathbf1_{\{\alpha(1-\alpha)^{n-j}|\xi_j|>2\alpha^{-1/2}\}}\ge\frac12\mathbf1_{\{|\xi_j|>4\alpha^{-3/2}\}}.
\end{align*}
The first inequality uses that $|v|=\alpha(1-\alpha)^{n-j}|\xi_j|$ is fixed under the conditioning. The left-hand side does not depend on $j$, so taking the maximum of the indicators on the right gives
\begin{align*}
&\quad \Pr\left(|X_n^{\mathrm{EU-N}}|>2\alpha^{-1/2}\middle|X_n^{\mathrm{EU}},|\xi_1|,\ldots,|\xi_n|\right)\\
&\ge\frac12\max_{1\le j\le n}\mathbf1_{\{|\xi_j|>4\alpha^{-3/2}\}}=\frac12\mathbf1_{\{\max_{1\le j\le n}|\xi_j|>4\alpha^{-3/2}\}}.
\end{align*}
Taking expectations now yields
\begin{equation}\label{eq:sign-arg}
\Pr(|X_n^{\mathrm{EU-N}}|>2\alpha^{-1/2})\ge\frac12\Pr\left(\max_{1\le j\le n}|\xi_j|>4\alpha^{-3/2}\right).
\end{equation}
The density of $\xi$ gives $\Pr(|\xi|>4\alpha^{-3/2})=(4\alpha^{-3/2})^{-5/2}=\alpha^{15/4}/32$. Since $\xi_1,\ldots,\xi_n$ are independent and $n\alpha=1/2$, we obtain
\begin{align*}
&\ \quad \Pr\left(\max_{1\le j\le n}|\xi_j|>4\alpha^{-3/2}\right)=1-\left(1-\frac{\alpha^{15/4}}{32}\right)^n\\
&\ge1-\exp\left(-\frac{n\alpha^{15/4}}{32}\right)=1-\exp\left(-\frac{\alpha^{11/4}}{64}\right)\ge\frac{\alpha^{11/4}}{128},
\end{align*}
where we used $1-x\le e^{-x}$ and $1-e^{-t}\ge t/2$ for $0\le t\le1$, noting that $\alpha^{11/4}/64\le1$.
Combining this bound with~\eqref{eq:sign-arg} gives
\begin{equation}\label{eq:counterexample-noisy-tail}
\Pr(|X_n^{\mathrm{EU-N}}|>2\alpha^{-1/2})\ge\frac{\alpha^{11/4}}{256}.
\end{equation}

Let $Y$ have distribution $\mathcal L(X_n^{\mathrm{EU}})$ or $\pi$, and couple it arbitrarily with $X_n^{\mathrm{EU-N}}$. In either case, $Y$ is centered Gaussian with variance at most $2$. On the event $\{|X_n^{\mathrm{EU-N}}|>2\alpha^{-1/2},|Y|\le\alpha^{-1/2}\}$, the triangle inequality gives $|X_n^{\mathrm{EU-N}}-Y|>\alpha^{-1/2}$. Hence, by~\eqref{eq:counterexample-noisy-tail} and the Gaussian tail bound,
\begin{align*}
\mathbb E|X_n^{\mathrm{EU-N}}-Y|^2&\ge\alpha^{-1}\Pr(|X_n^{\mathrm{EU-N}}|>2\alpha^{-1/2},|Y|\le\alpha^{-1/2})\\
&\ge\alpha^{-1}\left[\Pr(|X_n^{\mathrm{EU-N}}|>2\alpha^{-1/2})-\Pr(|Y|>\alpha^{-1/2})\right]\\
&\ge\alpha^{-1}\left[\frac{\alpha^{11/4}}{256}-2e^{-1/(4\alpha)}\right]\\
&\ge\frac{\alpha^{7/4}}{512}.
\end{align*}
Here $2e^{-1/(4\alpha)}\le\alpha^{11/4}/512$ for $0<\alpha\le1/128$. Taking the infimum over all such couplings and then square roots proves~\eqref{eq:euler-finite-variance-counterexample}.
\end{proof}

\subsection{Proofs for Section~\ref{sec:rm-case}}\label{app:rm-case}

\randomizedmidpointcontrol*
\begin{proof}[Proof of Lemma~\ref{lem:randomized-midpoint-control}]
Write $\tau=\tau_{k+1}$. The inverse Gaussian rotation gives
\begin{equation*}
\sqrt{2\alpha\tau}Z'_{k+1}
=\sqrt{2\alpha}\tau Z_{k+1}+\sqrt{2\alpha\tau(1-\tau)}Z'''_{k+1}.
\end{equation*}
Subtracting the midpoint equation in~\eqref{eq:check-Y} from that in~\eqref{eq:noisy-rm-update}, the terms containing $Z'''_{k+1}$ cancel. Since $X_k^{\mathrm{RM-N}}-\bar X_k=-\alpha\xi_k^{(2)}$, this gives
\begin{align*}
Y_k^{\mathrm{RM-N}}-\widecheck Y_k
&=-\alpha\xi_k^{(2)}-\alpha\tau(\nabla U(X_k^{\mathrm{RM-N}})-\nabla U(\bar X_k))\\
&\quad -\alpha\tau b_{k+1}^{(1)}-\alpha\tau\xi_{k+1}^{(1)} +\sqrt{2\alpha}\tau(Z_{k+1}-\widetilde Z_{k+1}).
\end{align*}
By the definition of $T_{1,k}$,
\begin{equation*}
-\alpha\xi_k^{(2)}+\sqrt{2\alpha}\tau(Z_{k+1}-\widetilde Z_{k+1})
=\tau T_{1,k}-\alpha(1-\tau)\xi_k^{(2)}.
\end{equation*}
Substituting this identity into the preceding difference yields
\begin{align}\label{eq:midpoint-perturbation}
Y_k^{\mathrm{RM-N}}-\widecheck Y_k
&=\tau T_{1,k}-\alpha\tau b_{k+1}^{(1)}\notag\\
&\quad -\alpha\left[(1-\tau)\xi_k^{(2)}+\tau(\nabla U(X_k^{\mathrm{RM-N}})-\nabla U(\bar X_k))+\tau\xi_{k+1}^{(1)}\right].
\end{align}
Using the $M$-smoothness of $U$ and then $M\alpha\le1$, the first two terms inside the brackets satisfy
\begin{equation*}
\|(1-\tau)\xi_k^{(2)}+\tau(\nabla U(X_k^{\mathrm{RM-N}})-\nabla U(\bar X_k))\|\le(1-\tau+M\alpha\tau)\|\xi_k^{(2)}\|\le\|\xi_k^{(2)}\|.
\end{equation*}
Next, the distribution of $\xi_{k+1}^{(1)}$ given $(\bar X_k,\xi_k^{(2)},\tau)$ depends only on its query point $X_k^{\mathrm{RM-N}}=\bar X_k-\alpha\xi_k^{(2)}$. Assumption~\ref{ass:subgaussian} therefore gives
\begin{equation*}
\mathbb E(\xi_{k+1}^{(1)}\mid\bar X_k,\xi_k^{(2)},\tau)=0,\quad
\mathbb E(\|\xi_{k+1}^{(1)}\|^2\mid\bar X_k,\xi_k^{(2)},\tau)\le K^2p.
\end{equation*}
Conditional on $(\bar X_k,\xi_k^{(2)},\tau)$, the first two terms inside the brackets are fixed, so their cross term with $\tau\xi_{k+1}^{(1)}$ has expectation zero. Consequently,
\begin{align*}
&\ \quad \mathbb E\|(1-\tau)\xi_k^{(2)}+\tau(\nabla U(X_k^{\mathrm{RM-N}})-\nabla U(\bar X_k))+\tau\xi_{k+1}^{(1)}\|^2\\
&=\mathbb E\|(1-\tau)\xi_k^{(2)}+\tau(\nabla U(X_k^{\mathrm{RM-N}})-\nabla U(\bar X_k))\|^2+\mathbb E[\tau^2\|\xi_{k+1}^{(1)}\|^2]\\
&\le\mathbb E\|\xi_k^{(2)}\|^2+K^2p\le2K^2p.
\end{align*}
Combining this estimate with~\eqref{eq:stagewise-bias-bound} and~\eqref{eq:T1-coupling}, and applying Minkowski's inequality to~\eqref{eq:midpoint-perturbation}, yields
\begin{align*}
(\mathbb E\|Y_k^{\mathrm{RM-N}}-\widecheck Y_k\|^2)^{1/2}
&\le(\mathbb E[\tau^2\|T_{1,k}\|^2])^{1/2}+\alpha(\mathbb E[\tau^2\|b_{k+1}^{(1)}\|^2])^{1/2}\\
&\quad+\alpha\bigl(\mathbb E\|(1-\tau)\xi_k^{(2)}+\tau(\nabla U(X_k^{\mathrm{RM-N}})-\nabla U(\bar X_k))+\tau\xi_{k+1}^{(1)}\|^2\bigr)^{1/2}\\
&\le(\mathbb E\|T_{1,k}\|^2)^{1/2}+\alpha(\mathbb E\|b_{k+1}^{(1)}\|^2)^{1/2}+\sqrt2\alpha K\sqrt p,\\
&\le\sqrt{\frac23}\alpha^{3/2}K^2\sqrt p+\sqrt2\alpha K\sqrt p+\alpha\delta\sqrt p.
\end{align*}
Finally, the definition of $T_{2,k}$, Assumption~\ref{ass:smooth}, and~\eqref{eq:stagewise-bias-bound} imply
\begin{align*}
(\mathbb E\|T_{2,k}\|^2)^{1/2}
&\le\alpha(\mathbb E\|\nabla U(Y_k^{\mathrm{RM-N}})-\nabla U(\widecheck Y_k)\|^2)^{1/2}+\alpha(\mathbb E\|b_{k+1}^{(2)}\|^2)^{1/2}\\
&\le M\alpha(\mathbb E\|Y_k^{\mathrm{RM-N}}-\widecheck Y_k\|^2)^{1/2}+\alpha\delta\sqrt p\\
&\le\sqrt{\frac23}M\alpha^{5/2}K^2\sqrt p+\sqrt2M\alpha^2K\sqrt p
+\alpha\delta\sqrt p(1+M\alpha).
\end{align*}
\end{proof}

\exactrmcontraction*
\begin{proof}[Proof of Lemma~\ref{lem:exact-rm-contraction}]
Use the coupling specified in the statement, with the same $\tau_{k+1}$, $\widetilde Z_{k+1}$, and $Z'''_{k+1}$ for the two exact steps.
Since the Gaussian terms cancel, the midpoint and terminal differences are
\begin{align*}
\widecheck Y_k-Y_k^{\mathrm{RM}}
&=\bar X_k-X_k^{\mathrm{RM}}-\alpha\tau_{k+1}\left(\nabla U(\bar X_k)-\nabla U(X_k^{\mathrm{RM}})\right),\\
\widecheck X_{k+1}-X_{k+1}^{\mathrm{RM}}
&=\bar X_k-X_k^{\mathrm{RM}}-\alpha\left(\nabla U(\widecheck Y_k)-\nabla U(Y_k^{\mathrm{RM}})\right).
\end{align*}
We have
\begin{equation}\label{eq:check-Y-lower-bound}
\|\widecheck Y_k-Y_k^{\mathrm{RM}}\|
\ge\|\bar X_k-X_k^{\mathrm{RM}}\| - \alpha\tau_{k+1}\|\nabla U(\bar X_k)-\nabla U(X_k^{\mathrm{RM}})\| \ge (1-M\alpha)\|\bar X_k-X_k^{\mathrm{RM}}\|.
\end{equation}
Next, expand the squared norm in the second identity and use the first identity to rewrite $\bar X_k-X_k^{\mathrm{RM}}$ in the cross term:
\begin{align*}
\|\widecheck X_{k+1}-X_{k+1}^{\mathrm{RM}}\|^2
&=\|\bar X_k-X_k^{\mathrm{RM}}\|^2+\alpha^2\|\nabla U(\widecheck Y_k)-\nabla U(Y_k^{\mathrm{RM}})\|^2\\
&\quad -2\alpha\left\langle\bar X_k-X_k^{\mathrm{RM}},\nabla U(\widecheck Y_k)-\nabla U(Y_k^{\mathrm{RM}})\right\rangle\\
&=\|\bar X_k-X_k^{\mathrm{RM}}\|^2+\alpha^2\|\nabla U(\widecheck Y_k)-\nabla U(Y_k^{\mathrm{RM}})\|^2 \\
&\quad -2\alpha\left\langle\widecheck Y_k-Y_k^{\mathrm{RM}},\nabla U(\widecheck Y_k)-\nabla U(Y_k^{\mathrm{RM}})\right\rangle\\
&\quad -2\alpha^2\tau_{k+1}\left\langle\nabla U(\bar X_k)-\nabla U(X_k^{\mathrm{RM}}),\nabla U(\widecheck Y_k)-\nabla U(Y_k^{\mathrm{RM}})\right\rangle.
\end{align*}
For the first inner product, applying the interpolation inequality~\eqref{eq:interpolation} yields
\begin{equation*}
\left\langle\widecheck Y_k-Y_k^{\mathrm{RM}},\nabla U(\widecheck Y_k)-\nabla U(Y_k^{\mathrm{RM}})\right\rangle
\ge\frac{mM}{m+M}\|\widecheck Y_k-Y_k^{\mathrm{RM}}\|^2+\frac{1}{m+M}\|\nabla U(\widecheck Y_k)-\nabla U(Y_k^{\mathrm{RM}})\|^2.
\end{equation*}
For the second inner product, Young's inequality with weight $m+M$ gives
\begin{align*}
&\quad -2\alpha^2\tau_{k+1}\left\langle \nabla U(\bar X_k)-\nabla U(X_k^{\mathrm{RM}}),\nabla U(\widecheck Y_k)-\nabla U(Y_k^{\mathrm{RM}})\right\rangle\\
&\le\frac{\alpha}{m+M}\|\nabla U(\widecheck Y_k)-\nabla U(Y_k^{\mathrm{RM}})\|^2+(m+M)\alpha^3\tau_{k+1}^2\|\nabla U(\bar X_k)-\nabla U(X_k^{\mathrm{RM}})\|^2.
\end{align*}
Combining these two bounds yields
\begin{align*}
\|\widecheck X_{k+1}-X_{k+1}^{\mathrm{RM}}\|^2
&\le\|\bar X_k-X_k^{\mathrm{RM}}\|^2-\frac{2mM\alpha}{m+M}\|\widecheck Y_k-Y_k^{\mathrm{RM}}\|^2\\
&\quad +\left(\alpha^2-\frac{\alpha}{m+M}\right)\|\nabla U(\widecheck Y_k)-\nabla U(Y_k^{\mathrm{RM}})\|^2\\
&\quad +(m+M)\alpha^3\tau_{k+1}^2\|\nabla U(\bar X_k)-\nabla U(X_k^{\mathrm{RM}})\|^2.
\end{align*}
The coefficient in the third term is nonpositive because $\alpha\le1/(m+M)$, so this term can be discarded. Assumption~\ref{ass:smooth} further gives
\begin{equation*}
\|\nabla U(\bar X_k)-\nabla U(X_k^{\mathrm{RM}})\| \le M\|\bar X_k-X_k^{\mathrm{RM}}\|.
\end{equation*}
Using this Lipschitz bound, the midpoint estimate~\eqref{eq:check-Y-lower-bound}, and $\tau_{k+1}\le1$ in the preceding squared-distance bound gives
\begin{equation*}
\|\widecheck X_{k+1}-X_{k+1}^{\mathrm{RM}}\|^2 \le \left[1-\frac{2mM\alpha}{m+M}(1-M\alpha)^2+(m+M)M^2\alpha^3\right]\|\bar X_k-X_k^{\mathrm{RM}}\|^2,
\end{equation*}
This proves the contraction bound in the lemma.

Moreover, for $u=M\alpha$,
\begin{equation*}
\frac{1}{m\alpha}\left(\frac{2mM\alpha}{m+M}(1-u)^2-(m+M)M^2\alpha^3\right)=\frac{2\kappa}{\kappa+1}(1-u)^2-(\kappa+1)u^2.
\end{equation*}
If $u\le1/(8\sqrt\kappa)$, then $u\le1/8$ because $\kappa\ge1$. Since
\begin{equation*}
\frac{2\kappa}{\kappa+1}(1-u)^2\ge(1-u)^2\ge\frac{49}{64},
\end{equation*}
whereas
\begin{equation*}
(\kappa+1)u^2\le\frac{\kappa+1}{64\kappa}\le\frac{1}{32}.
\end{equation*}
Consequently,
\begin{equation*}
\frac{2\kappa}{\kappa+1}(1-u)^2-(\kappa+1)u^2\ge\frac{49}{64}-\frac{1}{32}=\frac{47}{64},
\end{equation*}
which proves that the contraction factor is at most $1-(47/64)m\alpha$.
\end{proof}

\begin{lemma}[Scalar square-root recursion]
\label{lem:scalar-square-root-recursion}
Let $0<t_1\le1$ and $t_2,t_3,t_4\ge0$. If a nonnegative sequence satisfies
\begin{equation*}
s_{k+1}\le\sqrt{(1-t_1)s_k^2+2t_4s_k+t_2}+t_3,\quad s_0=0,
\end{equation*}
then, for every $k\ge0$,
\begin{equation*}
s_k\le\frac{2(t_3+t_4)}{t_1}+\sqrt{\frac{t_2}{t_1}}.
\end{equation*}
\end{lemma}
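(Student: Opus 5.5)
The plan is induction on $k$, using an invariant set for the one-step map. Write $B=\frac{2(t_3+t_4)}{t_1}+\sqrt{t_2/t_1}$ for the claimed bound. The base case is $s_0=0\le B$. The right-hand side $f(s)=\sqrt{(1-t_1)s^2+2t_4s+t_2}+t_3$ is nondecreasing in $s\ge0$, since all coefficients are nonnegative and $1-t_1\ge0$. Hence if $s_k\le B$, then $s_{k+1}\le f(s_k)\le f(B)$. It therefore suffices to prove the single inequality $f(B)\le B$, that is,
\begin{equation*}
(1-t_1)B^2+2t_4B+t_2\le (B-t_3)^2 .
\end{equation*}
Here $B-t_3\ge0$ holds automatically because $t_1\le1$.

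To verify this, expand the right-hand side as $B^2-2t_3B+t_3^2$ and drop $t_3^2\ge0$. It then suffices to show
\begin{equation*}
t_1B^2\ge 2(t_3+t_4)B+t_2 .
\end{equation*}
Write $B=a+c$ with $a=2(t_3+t_4)/t_1$ and $c=\sqrt{t_2/t_1}$. Then
\begin{equation*}
t_1B^2=t_1(a+c)B\ge t_1aB+t_1c^2 ,
\end{equation*}
where the inequality uses $B\ge c$. Since $t_1a=2(t_3+t_4)$ and $t_1c^2=t_2$, this is exactly the required inequality. The case $t_2=0$ needs no separate treatment, because then $c=0$ and the same algebra goes through.

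I do not expect a real obstacle. The only point needing care is the monotonicity of $f$, which is what lets us replace $s_k$ by its bound $B$ inside the square root; it holds because $t_1\le1$ and $t_4\ge0$. A second, minor point is that squaring the inequality $f(B)\le B$ is legitimate only when $B-t_3\ge0$, which is why that fact is recorded above.
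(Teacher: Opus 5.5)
Your proof is correct and matches the paper's argument. Both reduce the claim to the fixed-point check $(1-t_1)B^2+2t_4B+t_2\le(B-t_3)^2$ with $B\ge t_3$, and then conclude by monotonicity of the one-step map and induction from $s_0=0$; the only cosmetic difference is that the paper computes the slack exactly as $t_3^2+2(t_3+t_4)\sqrt{t_2/t_1}$, whereas you discard $t_3^2$ and use $B\ge\sqrt{t_2/t_1}$.
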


\begin{proof}[Proof of Lemma~\ref{lem:scalar-square-root-recursion}]
Set
\begin{equation*}
r=\frac{2(t_3+t_4)}{t_1}+\sqrt{\frac{t_2}{t_1}}.
\end{equation*}
Since $0<t_1\le1$, we have $r\ge t_3$. Direct calculation gives
\begin{equation*}
t_1r^2-2(t_3+t_4)r+t_3^2-t_2=t_3^2+2(t_3+t_4)\sqrt{\frac{t_2}{t_1}}\ge0.
\end{equation*}
Equivalently,
\begin{equation*}
(1-t_1)r^2+2t_4r+t_2\le r^2-2t_3r+t_3^2=(r-t_3)^2.
\end{equation*}
Since $r\ge t_3$, taking square roots gives
\begin{equation*}
\sqrt{(1-t_1)r^2+2t_4r+t_2}+t_3\le r.
\end{equation*}
The right side of the recurrence is nondecreasing in $s_k\ge0$. Thus $s_0=0$ and induction give $s_k\le r$ for every $k\ge0$.
\end{proof}

\randomizedmidpointbias*
\begin{proof}[Proof of Theorem~\ref{thm:randomized-midpoint-bias}]
Set
\begin{equation*}
D_k=W_2(\mathcal L(\bar X_k),\mathcal L(X_k^{\mathrm{RM}})),
\end{equation*}
and initialize the exact chain by $X_0^{\mathrm{RM}}=\bar X_0=X_0^{\mathrm{RM-N}}$. At step $k$, choose an optimal coupling such that
\begin{equation*}
\mathbb E\|\bar X_k-X_k^{\mathrm{RM}}\|^2=D_k^2.
\end{equation*}
For $k\ge1$, realize this coupling by sampling $X_k^{\mathrm{RM}}$ given $\bar X_k$, conditionally independently of the pair $(Y_{k-1}^{\mathrm{RM-N}},\xi_k^{(2)})$. Define the sigma-field
\begin{equation*}
\mathcal C_k=\sigma(\bar X_k,X_k^{\mathrm{RM}},Y_{k-1}^{\mathrm{RM-N}}).
\end{equation*}
By~\eqref{eq:bar-X-definition}, $\bar X_k$ and the query point $Y_{k-1}^{\mathrm{RM-N}}$ are determined before $\xi_k^{(2)}$ is drawn. Thus conditioning on $\mathcal C_k$ leaves $\xi_k^{(2)}$ centered and sub-Gaussian with parameter $K$.

Next, we draw $Z_{k+1}\sim\mathcal N(0,I_p)$ independently of $(\mathcal C_k,\xi_k^{(2)})$. Conditional on $\mathcal C_k$, apply Lemma~\ref{lem:gaussian-smoothing} to couple
\begin{equation*}
-\alpha\xi_k^{(2)}+\sqrt{2\alpha}Z_{k+1} \quad\text{and}\quad \sqrt{2\alpha}\widetilde Z_{k+1}.
\end{equation*}
Implement this coupling by keeping $(\xi_k^{(2)},Z_{k+1})$. Then $\mathcal L(\widetilde Z_{k+1}\mid\mathcal C_k)=\mathcal N(0,I_p)$, and the difference $T_{1,k}$ defined in~\eqref{eq:T1-coupling} satisfies
\begin{equation}\label{eq:conditional-T1-bound}
\mathbb E(T_{1,k}\mid\mathcal C_k)=0,\quad
\mathbb E(\|T_{1,k}\|^2\mid\mathcal C_k)\le\frac23\alpha^3K^4p.
\end{equation}

Afterwards, we draw mutually independent $\tau_{k+1}\sim\mathrm{Unif}[0,1]$ and $Z'''_{k+1}\sim\mathcal N(0,I_p)$, independently of $\mathcal C_k$ and $(\xi_k^{(2)},Z_{k+1},\widetilde Z_{k+1})$. Reconstruct $Z'_{k+1},Z''_{k+1}$ by the inverse rotation and apply~\eqref{eq:noisy-rm-update} starting from $X_k^{\mathrm{RM-N}}=\bar X_k-\alpha\xi_k^{(2)}$. For the two exact steps, use~\eqref{eq:check-Y} starting from $\bar X_k$ and $X_k^{\mathrm{RM}}$, with the same $\tau_{k+1},\widetilde Z_{k+1},Z'''_{k+1}$ resulting in $(\widecheck Y_k,\widecheck X_{k+1})$ and $(Y_k^{\mathrm{RM}},X_{k+1}^{\mathrm{RM}})$.

Lemma~\ref{lem:randomized-midpoint-control} gives
\begin{equation*}
\bar X_{k+1}-X_{k+1}^{\mathrm{RM}}
=\left(\widecheck X_{k+1}-X_{k+1}^{\mathrm{RM}}\right)+T_{1,k}+T_{2,k}.
\end{equation*}
We first control the exact-step difference $\widecheck X_{k+1}-X_{k+1}^{\mathrm{RM}}$ together with the error $T_{1,k}$. The step-size assumption implies $\alpha(m+M)\le1/4<1$, so Lemma~\ref{lem:exact-rm-contraction} yields
\begin{equation}\label{eq:exact-step-L2}
\mathbb E\|\widecheck X_{k+1}-X_{k+1}^{\mathrm{RM}}\|^2 \le\left(1-\frac{47m\alpha}{64}\right)D_k^2.
\end{equation}
Since the difference $\widecheck X_{k+1}-X_{k+1}^{\mathrm{RM}}$ and $T_{1,k}$ share $\widetilde Z_{k+1}$, their cross term need not vanish. We separate the current-state difference $\bar X_k-X_k^{\mathrm{RM}}$, whose cross term (with $T_{1,k}$) does vanish by conditional centering, from the drift correction introduced during the step. To bound that correction, first use~\eqref{eq:interpolation} to obtain
\begin{align*}
\|\widecheck Y_k-Y_k^{\mathrm{RM}}\|^2
&\le\|\bar X_k-X_k^{\mathrm{RM}}\|^2-\left(\frac{2\alpha\tau_{k+1}}{m+M}-\alpha^2\tau_{k+1}^2\right) \|\nabla U(\bar X_k)-\nabla U(X_k^{\mathrm{RM}})\|^2\\
&\le\|\bar X_k-X_k^{\mathrm{RM}}\|^2.
\end{align*}
The last inequality follows from $\alpha M\le1$ and $m\le M$. Consequently,
\begin{equation}\label{eq:cross-term-extra-alpha}
\|\widecheck X_{k+1}-X_{k+1}^{\mathrm{RM}}-(\bar X_k-X_k^{\mathrm{RM}})\|=\alpha\|\nabla U(\widecheck Y_k)-\nabla U(Y_k^{\mathrm{RM}})\| \le M\alpha\|\bar X_k-X_k^{\mathrm{RM}}\|.
\end{equation}
Since $\bar X_k-X_k^{\mathrm{RM}}$ is $\mathcal C_k$-measurable, \eqref{eq:conditional-T1-bound} gives
\begin{equation*}
\mathbb E\langle\bar X_k-X_k^{\mathrm{RM}},T_{1,k}\rangle=0.
\end{equation*}
Using~\eqref{eq:cross-term-extra-alpha} and the Cauchy-Schwarz inequality, we obtain
\begin{align*}
&\quad \ 2\mathbb E\langle\widecheck X_{k+1}-X_{k+1}^{\mathrm{RM}},T_{1,k}\rangle\\
&=2\mathbb E\langle\widecheck X_{k+1}-X_{k+1}^{\mathrm{RM}}-(\bar X_k-X_k^{\mathrm{RM}}),T_{1,k}\rangle\\
&\le2\left(\mathbb E\|\widecheck X_{k+1}-X_{k+1}^{\mathrm{RM}}-(\bar X_k-X_k^{\mathrm{RM}})\|^2\right)^{1/2} \left(\mathbb E\|T_{1,k}\|^2\right)^{1/2}\\
&\le2M\alpha D_k(\mathbb E\|T_{1,k}\|^2)^{1/2}.
\end{align*}

Next, we set
\begin{align*}
t_1&=\frac{47m\alpha}{64},\quad t_2=\frac23\alpha^3K^4p,\\
t_3&=\sqrt{\frac23}M\alpha^{5/2}K^2\sqrt p+\sqrt2M\alpha^2K\sqrt p+\alpha\delta\sqrt p(1+M\alpha).
\end{align*}
Combining this cross-term estimate with~\eqref{eq:conditional-T1-bound} and~\eqref{eq:exact-step-L2}, and applying Lemma~\ref{lem:randomized-midpoint-control} and Minkowski's inequality, gives
\begin{align*}
D_{k+1}&\le(\mathbb E\|\widecheck X_{k+1}-X_{k+1}^{\mathrm{RM}}+T_{1,k}\|^2)^{1/2}+(\mathbb E\|T_{2,k}\|^2)^{1/2}\\
&\le\left[(1-t_1)D_k^2+2M\alpha D_k(\mathbb E\|T_{1,k}\|^2)^{1/2}+\mathbb E\|T_{1,k}\|^2\right]^{1/2}+(\mathbb E\|T_{2,k}\|^2)^{1/2}\\
&\le\sqrt{(1-t_1)D_k^2+2M\alpha\sqrt{t_2}D_k+t_2}+t_3.
\end{align*}
The step size condition gives $0<t_1\le1$. Since $D_0=0$, applying Lemma~\ref{lem:scalar-square-root-recursion} with $t_4=M\alpha\sqrt{t_2}$ yields
\begin{equation*}
D_k\le\frac{2(t_3+M\alpha\sqrt{t_2})}{t_1}+\sqrt{\frac{t_2}{t_1}}.
\end{equation*}
Substituting the values of $t_1,t_2,t_3$ gives
\begin{align*}
D_k&\le\alpha\sqrt p\left[\frac{128\sqrt2}{47}\kappa K+\frac{256}{47}\sqrt{\frac23}\kappa\sqrt\alpha K^2+\sqrt{\frac{128}{141}}\frac{K^2}{\sqrt m}\right]+\frac{128\delta\sqrt p}{47m}(1+M\alpha).
\end{align*}
Since $\sqrt\alpha K^2\le K$, $M\alpha\le1/8$, and
\begin{equation*}
\frac{128}{47}\left(\sqrt2+2\sqrt{\frac23}\right)<9,\quad \sqrt{\frac{128}{141}}<1,\quad \frac{128}{47}(1+M\alpha)\le\frac{144}{47}<4,
\end{equation*}
we obtain
\begin{equation*}
D_k\le\alpha\sqrt p\left(9\kappa K+\frac{K^2}{\sqrt m}\right)+\frac{4\delta\sqrt p}{m}.
\end{equation*}
Finally, the defining relation between $X_k^{\mathrm{RM-N}}$ and $\bar X_k$ gives
\begin{equation*}
W_2(\mathcal L(X_k^{\mathrm{RM-N}}),\mathcal L(\bar X_k))\le\alpha(\mathbb E\|\xi_k^{(2)}\|^2)^{1/2}\le\alpha K\sqrt p.
\end{equation*}
The triangle inequality and $\kappa\ge1$ now prove~\eqref{eq:rm-bias-bound}.
\end{proof}

\section{Experimental details}\label{app:exp}

\subsection{One-dimensional experiments}\label{app:exp-settings}

Both Euler and RM methods start from $\pi$ and take $T/\alpha$ steps, with
\begin{equation*}
\begin{array}{c|c|l}
\text{Example}&T&1/\alpha\\ \hline
\text{Strongly convex}&8&32,\ 48,\ 64,\ 96,\ 128,\ 192\\
\text{Bimodal}&32&96,\ 128,\ 192,\ 256,\ 384,\ 512.
\end{array}
\end{equation*}
Gradient errors are independent across queries and independent of the Brownian increments and randomized times. In the bias comparison, the same constant is added at both RM evaluations.

For Gaussian and MoG errors, we compute the terminal densities by numerically evaluating the recursion
\begin{equation*}
p_{k+1}(y)=\int_{\mathbb R}p_k(x)P_\alpha(y\mid x) \mathrm dx,
\end{equation*}
where $P_\alpha(y\mid x)$ is the one-step transition density of Euler or RM. The integral over $x$ is evaluated on a uniform spatial grid. For RM, the transition density is obtained by integrating over the randomized time and Gaussian variables.

Integrating the terminal densities and inverting their CDFs gives the quantile functions. We compute their distances by using
\begin{equation}\label{eq:exp-quantile}
W_2^2(\mu,\nu)=\int_0^1\bigl[Q_\mu(u)-Q_\nu(u)\bigr]^2 \mathrm du.
\end{equation}
For Gaussian and MoG errors, we substitute $u=t^4/2$ and $u=1-t^4/2$ in the two halves of the integral.
For Student errors, we approximate central densities by scaled Hermite expansions and extend them with tail probabilities decaying as $|x|^{-2.05}$. In the strongly convex example, we evaluate the equivalent spatial integral with $x=\operatorname{sgn}(z)[(1-|z|)^{-20}-1]/20$, $-1<z<1$. In the bimodal example, we use $u=t^{41}/2$ and $u=1-t^{41}/2$ in the quantile integral.
The strongly convex example uses a grid on $[-8,8]$ with spacing $1/64$, and 12 time and 24 Gaussian nodes; the bimodal example uses $[-4,4]$ with spacing $1/256$ and 20 nodes for each RM integral. Grid densities are interpolated linearly. The Student expansions use degree 192 and scale $0.5$ in the strongly convex example, with degree 320 at $1/\alpha=32,96,192$, and degree 384 and scale $0.35$ in the bimodal example.

\subsection{Bayesian logistic regression}\label{app:exp-logistic}

Images are rescaled by $1/255$ before PCA. With labels $y_i\in\{-1,1\}$, the posterior potential is
\begin{equation*}
U(\theta)=\frac12\|\theta\|^2+\sum_{i=1}^n\ell_i(\theta),
\quad \ell_i(\theta)=\log\left(1+\exp({-y_i x_i^\top\theta})\right).
\end{equation*}
Since $\|\nabla\ell_i(\theta)\|\le\|x_i\|$, the errors in~\eqref{eq:exp-minibatch} are uniformly bounded, with conditional covariance proportional to $1/B$. Both methods run in coordinates $\theta=\widehat\theta+Lq$, where $\widehat\theta$ is the posterior mode and $LL^\top=[\nabla^2U(\widehat\theta)]^{-1}$. They use gradient $L^\top g_B(\widehat\theta+Lq)$, start from $q=0$, and take steps
\begin{equation*}
1/\alpha\in\{4,6,8,12,16,24,32,48,64,96,128\}.
\end{equation*}
These steps exceed the global sufficient bounds in the theorems. Euler uses one minibatch per step and RM uses two. Each configuration uses 20 independent repetitions, each with $N=65,536$ terminal samples.
The posterior reference uses an independence Metropolis sampler in $q$ coordinates with an identity-scale multivariate $t_{10}$ proposal. We run 64 independent chains, discard 2,000 iterations per chain, and retain 32,768 draws from each, giving $32N$ draws.

We measure discrepancies by sliced-$W_2$ distances \citep{bonneel2015sliced}, which use 256 fixed directions drawn uniformly from the unit sphere in $q$ coordinates. For direction $\ell$, let $\widehat Q_{\mu,\ell}$ and $\widehat Q_{\nu,\ell}$ be the empirical quantile functions of the projected samples. We compute
\begin{equation*}
\widehat{\mathrm{SW}}_2^2(\mu,\nu)=\frac1{256}\sum_{\ell=1}^{256}\int_0^1\left[\widehat Q_{\mu,\ell}(u)-\widehat Q_{\nu,\ell}(u)\right]^2 \mathrm{d}u.
\end{equation*}
For two sets of $N$ samples, each integral is the mean squared difference between corresponding sorted projections. For the posterior comparison, each noisy projection is paired with 32 consecutive sorted reference projections, and all $32N$ squared differences are averaged.

\end{document}